\newtheorem{thm}{Theorem}[section]
\newtheorem{prop}[thm]{Proposition}
\newtheorem{lem}[thm]{Lemma}
\newtheorem{q}[thm]{Question}
\newtheorem{cor}[thm]{Corollary}
\newtheorem{claim}[thm]{Claim}
\theoremstyle{definition}
\newtheorem{definition}[thm]{Definition}
\newtheorem{set}[thm]{Setup}
\theoremstyle{remark}
\numberwithin{equation}{section}
\newcommand{\bQ}{\mathbb{Q}}
\newcommand\OO{{\mathcal{O}}}
\newcommand{\Proj}{\operatorname{Proj}}
\begin{document}

\title{Characterizing terminal Fano threefolds with the smallest anti-canonical volume, II}
\date{\today}
\author{Chen Jiang}
\address{Chen Jiang, Shanghai Center for Mathematical Sciences, Fudan University, Jiangwan Campus, Shanghai, 200438, China}
\email{chenjiang@fudan.edu.cn}

\dedicatory{Dedicated to Professor Vyacheslav Shokurov on the occasion of his 70th birthday}

\begin{abstract}
It was proved by J.~A.~Chen and M.~Chen that a terminal Fano $3$-fold $X$ satisfies $(-K_X)^3\geq \frac{1}{330}$.
We show that a $\bQ$-factorial terminal Fano $3$-fold $X$ with $\rho(X)=1$ and $(-K_X)^3=\frac{1}{330}$ is a weighted hypersurface of degree $66$ in $\mathbb{P}(1,5,6,22,33)$.

By the same method, we also give characterizations for other $11$ examples of weighted hypersurfaces of the form $X_{6d}\subset \mathbb{P}(1,a,b,2d,3d)$ in Iano-Fletcher's list. Namely, we show that if a $\bQ$-factorial terminal Fano $3$-fold $X$ with $\rho(X)=1$ has the same numerical data as $X_{6d}$, then $X$ itself is a weighted hypersurface of the same type.
\end{abstract}

\keywords{Fano threefolds, anti-canonical volumes}
\subjclass[2020]{14J45, 14J30, 14J17}
\maketitle
\pagestyle{myheadings} \markboth{\hfill C.~Jiang \hfill}{\hfill Terminal Fano $3$-folds with the smallest anti-canonical volume, II\hfill}


\section{Introduction}
Throughout this note, we work over the field of complex numbers $\mathbb{C}$.

A normal projective variety $X$ is called a {\it Fano variety} (resp., {\it weak Fano variety}) if the anti-canonical divisor $-K_X$ is ample (resp., nef and big). 
A {terminal} (resp. canonical) Fano variety is a Fano variety with at worst terminal (resp., canonical) singularities. A {terminal} (resp. canonical) weak Fano variety is a weak Fano variety with at worst terminal (resp., canonical) singularities.

According to the minimal model program, Fano varieties form a fundamental class among research objects of birational geometry. Motivated by the classification theory of $3$-dimensional algebraic varieties, we are interested in the study of explicit geometry of terminal/canonical Fano $3$-folds.

Given a terminal weak Fano $3$-fold $X$, it was proved in \cite[Theorem~1.1]{CC08} that $(-K_{X})^3\geq \frac{1}{330}$. This lower bound is optimal, as it is attained when $X=X_{66}\subset \mathbb{P}(1,5,6,22,33)$ is a general weighted hypersurface of degree $66$. Moreover, \cite[Theorem~1.1]{CC08} showed that when $(-K_{X})^3= \frac{1}{330}$, then $X$ has exactly the same Reid basket of virtual orbifold singularities as $X_{66}$.
So it is interesting to ask the following question:
\begin{q}\label{main q}
Let $X$ be a terminal (weak) Fano $3$-fold with $(-K_X)^3=\frac{1}{330}$. 
Is $X$ a $\mathbb{Q}$-Gorenstein deformation of a quasi-smooth weighted hypersurface of degree $66$ in 
 $\mathbb{P}(1,5,6,22,33)$?
 \end{q}

In \cite{330}, we give a partial answer to this question in the category of non-rational $\bQ$-factorial terminal Fano $3$-folds with $\rho=1$. The main purpose of this note is to remove the non-rationality condition. The following is the main theorem of this note.
\begin{thm}\label{mainthm}
Let $X$ be a $\bQ$-factorial terminal Fano $3$-fold with $\rho(X)=1$ and $(-K_X)^3=\frac{1}{330}$.
Then $X$ is a weighted hypersurface in $\mathbb{P}(1,5,6,22,33)$ defined by a weighted homogeneous polynomial $F$ of degree $66$, where 
$$F(x, y, z, w, t)=t^2+F_0(x, y, z, w)$$ in suitable homogeneous coordinates $[x: y: z: w: t]$ of $\mathbb{P}(1, 5, 6, 22, 33)$.
\end{thm}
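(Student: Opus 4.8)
The plan is to reconstruct the weighted hypersurface structure from the numerical data $(-K_X)^3 = \frac{1}{330}$ alone. The starting point is the theorem of Chen–Chen: when the volume is exactly $\frac{1}{330}$, the Reid basket of orbifold singularities of $X$ is forced to coincide with that of $X_{66} \subset \mathbb{P}(1,5,6,22,33)$, so $X$ has a completely determined set of terminal cyclic quotient singularities (namely those of types $\frac{1}{5}, \frac{1}{6}, \frac{1}{22}, \frac{1}{33}$ coming from the weighted projective space coordinates). With the basket fixed, the strategy is to run the orbifold Riemann–Roch machinery to pin down the Hilbert function of $X$ with respect to $-K_X$. Concretely, I would compute $h^0(X, -mK_X)$ for all $m \geq 0$ using Reid's formula, using the fixed basket to evaluate the correction terms and using Kawamata–Viehweg vanishing to kill higher cohomology. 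This produces an explicit Hilbert series, which should match the Hilbert series of the anticanonical ring $R(X, -K_X) = \bigoplus_m H^0(X, -mK_X)$ of the model $X_{66}$.

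\medskip

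The next step is to read off generators and relations of the anticanonical ring from the Hilbert series. The Hilbert series $\sum_m h^0(-mK_X) t^m$ should take the form $\frac{1 - t^{66}}{(1-t)(1-t^5)(1-t^6)(1-t^{22})(1-t^{33})}$, the numerator encoding a single relation in degree $66$. From the denominator I would extract that $R(X,-K_X)$ has exactly five generators, in degrees $1,5,6,22,33$; call them $x,y,z,w,t$. The crucial arithmetic input is that $2\cdot 33 = 66$ and $3 \cdot 22 = 66$, so a degree-$66$ relation must exist among these generators, and the generator $t$ of degree $33$ can appear in that relation only through $t^2$ (any other monomial involving $t$ would have wrong degree or be inexpressible). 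Establishing that the generators exist in the claimed degrees — that is, that the graded pieces $H^0(-mK_X)$ are actually generated by the expected monomials and that no extra generators sneak in — is where I expect the real work to lie, since a priori a generator could fail to exist or the multiplication maps could fail to be surjective.

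\medskip

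The hard part will be proving that the anticanonical ring is genuinely generated in these five degrees with a single relation, rather than merely having the right Hilbert function. The Hilbert series alone does not guarantee the ring structure: one must show that the natural map from the polynomial ring $\mathbb{C}[x,y,z,w,t]$ (with the assigned weights) to $R(X,-K_X)$ is surjective with kernel generated by one element. I would attack this by carefully analyzing the projective embedding $X \hookrightarrow \Proj R(X,-K_X)$, verifying $\rho(X)=1$ forces the image to be a hypersurface of the expected codimension in $\mathbb{P}(1,5,6,22,33)$ (codimension one), and checking quasi-smoothness or at least that the singularities of the image match the prescribed basket. The existence of the generators in low degree typically follows from the basket-determined values $h^0(-mK_X)$ for small $m$ together with $\bQ$-factoriality and $\rho = 1$, which rule out the variety splitting off extra structure.

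\medskip

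Finally, having identified $X$ with a hypersurface $\{F=0\}$ of degree $66$, I would normalize the defining equation. Since $t$ has degree $33$ and the only degree-$66$ monomials involving $t$ are $t^2$ and $t \cdot (\text{degree } 33 \text{ in } x,y,z,w)$, and since $w^3, t^2$ are the only ways to reach degree $66$ through the top-weight variables, a coordinate change completing the square in $t$ puts $F$ in the form $F = t^2 + F_0(x,y,z,w)$. The coefficient of $t^2$ must be nonzero — otherwise $X$ would be singular along the locus where all lower-weight generators vanish in a way incompatible with terminality and the fixed basket — which is precisely the input that lets us solve for $t^2$ and absorb the cross terms. This last normalization is essentially the statement to be proved, so I would present it as a clean corollary of the ring-theoretic identification carried out in the previous steps.
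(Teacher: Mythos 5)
Your outline matches the paper's skeleton at the top level (fix the basket via Chen--Chen, compute the Hilbert series by Reid's Riemann--Roch and Kawamata--Viehweg vanishing, then identify $R(X,-K_X)$ with $\mathbb{C}[x,y,z,w,t]/(t^2+F_0)$), but the two places you flag as ``where the real work lies'' are exactly where the proposal has no argument, and they cannot be filled by the considerations you suggest. First, to show the ring is generated by five general sections $f,g,h,p,q$ of degrees $1,5,6,22,33$ you need to know that these sections are algebraically independent in the right degrees, which the paper gets from the anti-canonical geometry machinery of Chen--Jiang: one must prove that $|-6K_X|$ is not composed with a pencil (this is the only place $\rho(X)=1$, $\bQ$-factoriality and terminality enter, via an Alexeev-type theorem on $|-K_X|$), that $|-44K_X|$ is generically finite, and that $|-66K_X|$ is birational. ``Carefully analyzing the embedding $X\hookrightarrow\Proj R(X,-K_X)$'' is circular here, since $X=\Proj R(X,-K_X)$ is automatic from ampleness and tells you nothing about generators; and $\rho=1$ does not by itself force surjectivity of the multiplication maps.

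Second, and more seriously, your justification for the nonvanishing of the coefficient of $t^2$ misses the actual difficulty, which is the entire point of this paper relative to its predecessor. The danger is not that $Y$ becomes too singular: it is that the degree-$66$ relation could have the form $tF_1+F_2=0$ (linear in $t$), in which case $Y$ is birational to $\mathbb{P}(1,5,6,22)$ and the map $\Phi_{44}\colon X\dashrightarrow\mathbb{P}(1,5,6,22)$ given by $[f:g:h:p]$ is birational rather than $2:1$ --- a scenario perfectly consistent with the Hilbert series and not obviously incompatible with terminality of $X$ (singularities of the birational image $Y$ say nothing direct about $X$). The paper excludes this by its key Lemma~\ref{lem deg=2}: no $|-kK_X|$ with $k<44$ is generically finite, the generic curve $C$ cut out by $|M_5|$ and $|M_6|$ has genus $1$ (else Corollary~\ref{cor bir criterion} would make $|-12K_X|$ generically finite), the intersection bound $(M_{22}\cdot M_5\cdot M_6)\leq 2\cdot 5\cdot 6\cdot 22\cdot\frac{1}{330}=2$ holds, and a degree-one divisor on an elliptic curve is never movable, forcing $\deg\Phi_{44}=2$ exactly. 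Without an argument of this kind your normalization step does not go through, so the proposal as written has a genuine gap at its central point.
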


As mentioned in \cite[Remark~1.3]{330}, similar method can be applied to characterize other weighted hypersurfaces in Iano-Fletcher's list \cite[16.6]{IF00}. We list them in Table~\ref{tableA}. The weighted hypersurfaces in Table~\ref{tableA} are all of the form $X_{6d}\subset \mathbb{P}(1,a,b,2d,3d)$ with $a+b=d$ and any such general hypersurface is a $\bQ$-factorial terminal Fano $3$-fold with $\rho=1$. The geometry of them are very interesting in the sense that the behavior of the anti-pluri-canonical systems of them match perfectly with the theoretical estimates in our prediction (\cite[Example~5.12]{CJ16}, \cite[Example~4.4.12]{Phd}).

\begin{longtable}{|l|l|l|l|}
\caption{Fano $3$-folds from Iano-Fletcher's list}\label{tableA}\\
		\hline
			No. & hypersurface & $-K^3$ & basket \\ \hline	
\endfirsthead
\multicolumn{4}{l}{{ {\bf \tablename\ \thetable{}} \textrm{-- continued from previous page}}} \\
\hline 
			No. & hypersurface & $-K^3$ & basket \\ \hline	
\endhead

 \multicolumn{4}{l}{{\textrm{Continued on next page}}} \\ \hline
\endfoot
\hline 
\endlastfoot

14 & ${X_{12}}\subset \mathbb{P}(1,1,1,4,6)$ & $1/2$ & $ (1,2) $\\ \hline
34 & ${X_{18}}\subset \mathbb{P}(1,1,2,6,9)$ & $1/6$ &$ 3\times(1,2),(1,3) $\\ \hline
53 & ${X_{24}}\subset \mathbb{P}(1,1,3,8,12)$ & $1/12$ &$ 2\times(1,3), (1,4) $\\ \hline
70 & ${X_{30}}\subset \mathbb{P}(1,1,4,10,15)$ & $1/20$ &$ (1,2), (1,4), (1,5) $\\ \hline
72 & ${X_{30}}\subset \mathbb{P}(1,2,3,10,15)$ & $1/30$ &$ 3\times(1,2), (2, 5), 2\times(1,3)$\\ \hline
82 & ${X_{36}}\subset \mathbb{P}(1,1,5,12,18)$ & $1/30$ &$ (2,5), (1,6) $\\ \hline
88 & ${X_{42}}\subset \mathbb{P}(1,1,6,14,21)$ & $1/42$ &$ (1,2), (1,3), (1,7)$\\ \hline
89 & ${X_{42}}\subset \mathbb{P}(1,2,5,14,21)$ & $1/70$ &$ 3\times(1,2),(3,7), (1,5) $\\ \hline
90 & ${X_{42}}\subset \mathbb{P}(1,3,4,14,21)$ & $1/84$ &$ (1,2),2\times(1,3), (2,7), (1,4) $\\ \hline
92 & ${X_{48}}\subset \mathbb{P}(1,3,5,16,24)$ & $1/120$ &$ (3,8),2\times(1,3), (1,5) $\\ \hline
94 & ${X_{54}}\subset \mathbb{P}(1,4,5,18,27)$ & $1/180$ &$ (1,2), (2,5), (1,4), (2,9) $\\ \hline
95 & ${X_{66}}\subset \mathbb{P}(1,5,6,22,33)$ & $1/330$ &$ (1,2), (2,5), (1,3), (2,11) $
\end{longtable}

We show the following result saying that a $\bQ$-factorial terminal Fano $3$-fold with $\rho=1$ shares the same numerical information with a weighted hypersurface in Table~\ref{tableA} is always a weighted hypersurface of the same type. 

\begin{thm}\label{mainthm2}
Let $X$ be a $\bQ$-factorial terminal Fano $3$-fold with $\rho(X)=1$ such that $(-K_X)^3=(-K_{X_{6d}})^3$ and $B_{X}=B_{X_{6d}}$ for some $$X_{6d}\subset \mathbb{P}(1,a,b,2d, 3d)$$ as in Table~\ref{tableA}. 
Then $X$ is a weighted hypersurface in $\mathbb{P}(1,a,b,2d,3d)$ defined by a weighted homogeneous polynomial $F$ of degree $6d$, where 
$$F(x, y, z, w, t)=t^2+F_0(x, y, z, w)$$ in suitable homogeneous coordinates $[x: y: z: w: t]$ of $\mathbb{P}(1, a, b, 2d, 3d)$.
\end{thm}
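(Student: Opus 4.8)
The plan is to reconstruct $X$ from its anti-canonical graded ring, which is completely determined by the numerical data. Since $X$ is a $\bQ$-factorial terminal Fano $3$-fold, $-K_X$ is ample and Kawamata--Viehweg vanishing gives $H^i(X,-mK_X)=0$ for all $i>0$ and $m\ge 0$; hence the plurigenus $P_m:=h^0(X,-mK_X)$ equals $\chi(X,-mK_X)$, which by Reid's orbifold Riemann--Roch (as used in \cite{CC08}) is a function of $(-K_X)^3$ and the basket $B_X$ alone. By hypothesis these agree with those of $X_{6d}$, so $P_m(X)=P_m(X_{6d})$ for every $m$, and the Hilbert series of $R:=R(X,-K_X)=\bigoplus_{m\ge 0}H^0(X,-mK_X)$ equals
$$\sum_{m\ge 0}P_m\,u^m=\frac{1-u^{6d}}{(1-u)(1-u^{a})(1-u^{b})(1-u^{2d})(1-u^{3d})}.$$
Thus it suffices to prove that $R$ is generated by five elements in degrees $1,a,b,2d,3d$ with a single relation in degree $6d$, after which $X=\Proj R$ is realised as the asserted weighted hypersurface.

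Next I would produce the generators explicitly. Comparing $P_m$ with the dimension of the image of the multiplication maps from strictly lower degrees, the successive discrepancies force new generators: sections $x$ (degree $1$), $y,z$ (degrees $a,b$), $w$ (degree $2d$) and finally $t$ (degree $3d$), each chosen outside the subring generated in lower degree. The decisive and hardest step is to show that these five elements already generate $R$, i.e.\ that every multiplication map $H^0(-m_1K_X)\ot H^0(-m_2K_X)\to H^0(-(m_1+m_2)K_X)$ needed to climb up in degree is surjective onto the relevant quotient. This is precisely the point at which the non-rationality hypothesis of \cite{330} was previously invoked, so I expect the main work to lie in controlling the low anti-pluri-canonical systems $|-mK_X|$ (their base loci and the maps they define) for possibly rational $X$, by the effective techniques of \cite{CC08}, carried out according to each of the twelve weight vectors. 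Once generation is established, the induced graded surjection $\bC[x,y,z,w,t]\twoheadrightarrow R$ has, by the Hilbert series above, kernel generated by a single element $F$ of degree $6d$, yielding a closed embedding $X\hookrightarrow \mathbb{P}(1,a,b,2d,3d)$ whose image is the hypersurface $\{F=0\}$ of degree $6d$.

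It remains to normalise $F$. Writing $F=\alpha t^{2}+t\,G(x,y,z,w)+H(x,y,z,w)$ with $\deg G=3d$ and $\deg H=6d$, I would show $\alpha\neq 0$. If $\alpha=0$ then $F$ vanishes at the vertex $P_t=[0:\dots:0:1]$, so $P_t\in X$; moreover, since no weight among $1,a,b,2d$ equals $3d$, the restriction of $F$ to the orbifold chart $t=1$ has no linear term, so $X$ is singular at $P_t$, producing a cyclic quotient singularity of index dividing $3d$ that is not present in the general member $X_{6d}$. This extra point alters the basket, contradicting $B_X=B_{X_{6d}}$; hence $\alpha\neq 0$. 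Since we work over $\bC$, the graded substitution $t\mapsto t+\tfrac{1}{2\alpha}G$ is an automorphism of $\mathbb{P}(1,a,b,2d,3d)$ preserving the grading, and after it together with a rescaling of $F$ we obtain $F=t^{2}+F_0(x,y,z,w)$, as claimed. The main obstacle throughout is the generation statement of the second paragraph; the Riemann--Roch bookkeeping of the first and the normalisation of the third are comparatively routine.
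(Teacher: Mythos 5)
Your proposal correctly sets up the frame (Hilbert series from Reid's Riemann--Roch, reduction to generation of $R(X,-K_X)$ by five elements in degrees $1,a,b,2d,3d$ with one relation in degree $6d$), and it correctly locates the difficulty. But it then defers exactly that difficulty: the sentence ``I expect the main work to lie in controlling the low anti-pluri-canonical systems\dots carried out according to each of the twelve weight vectors'' is a placeholder, not an argument. The entire content of the theorem is the generation statement, and no mechanism is offered for it. Note also that surjectivity of multiplication maps is not how the paper proceeds, and there is no reason to expect those maps to be surjective degree by degree; the paper instead (i) proves $\{f,g,h,p\}$ is algebraically independent and counts monomials against $h^0$ to get spanning up to degree $3d$, and (ii) identifies the image $Y$ of $\Phi_{3d}$ as a hypersurface of degree exactly $6d$ containing $t^2$, so that the subring generated by the five sections has the same Hilbert series as $R(X,-K_X)$ and hence equals it.

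The specific missing ideas are these. First, one needs that $|-bK_X|$ is not composed with a pencil; this is where $\rho(X)=1$, $\bQ$-factoriality and terminality enter, via \cite[Theorems~3.2 and 3.4]{CJ16}, and it is the input for algebraic independence of $f,g,h$ and then of $f,g,h,p$. Second, and this is the key new ingredient of the paper, one must show that $\Phi_{2d}$ has degree exactly $2$: taking $S\in|M_a|$ and a generic irreducible $C$ in $|M_b|_S|$, the birationality criteria force $g(C)=1$ (else $|-(a+b+1)K_X|$ would already be generically finite, contradicting the monomial count), and then
$$(M_{2d}\cdot M_a\cdot M_b)\leq 2abd\,(-K_X)^3=2$$
together with the fact that a degree-one divisor on an elliptic curve is not movable pins the degree at $2$. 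Without this, you cannot exclude that the unique relation has degree $<6d$ or lacks the $t^2$ term (equivalently, that $\Phi_{2d}$ is birational), which is precisely the case the non-rationality hypothesis of \cite{330} was used to exclude. Your alternative normalisation argument (a basket contradiction at the vertex $P_t$ when $\alpha=0$) is also not sound as stated: the singularity of $\{tG+H=0\}$ at $P_t$ is a quotient of a hypersurface singularity, not a priori a cyclic quotient point contributing a single extra basket element, and one would still have to rule out relations of degree strictly less than $6d$, which this local analysis does not address.
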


According to \cite{330}, the idea of the proof is as the following: we construct a rational map $\Phi_{3d}: X\dashrightarrow \mathbb{P}(1, a, b, 2d, 3d)$ by general global sections of $H^0(X, -mK_X)$ and show that $\Phi_{3d}$ is indeed an embedding. The reason that we need to assume non-rationality of $X$ in \cite{330} is that we need to rule out the case that the induced map 
$X\dashrightarrow \mathbb{P}(1, a, b, 2d)$ is birational. To remove this condition, one idea is to directly show that $X$ is non-rational under this setting, which seems very difficult (as non-rationality of Fano varieties is always a difficult problem). Our new ingredient is to apply the theory in \cite{CJ16, Phd} to show directly that $X\dashrightarrow \mathbb{P}(1, a, b, 2d)$ cannot be birational. This involves detail discussions on the behavior of anti-pluri-canonical systems of $X$. 
We will discuss another approach (unsuccessful yet) in the end which is related to Shokurov's earlier work.

\section{Reid's Riemann--Roch formula}\label{sec 2}

A {\it basket} $B$ is a collection of pairs of integers (permitting
weights), say $\{(b_i,r_i)\mid i=1, \cdots, s; b_i\ \text{is coprime
 to}\ r_i\}$. 
 
Let $X$ be a canonical weak Fano $3$-fold and $Y$ be a terminalization of $X$. According to 
Reid \cite{YPG}, there is a basket of orbifold points (called {\it Reid basket})
$$B_X=\bigg\{(b_i,r_i)\mid i=1,\cdots, s; 0<b_i\leq \frac{r_i}{2};b_i \text{ is coprime to } r_i\bigg\}$$
associated to $X$, which comes from locally deforming singularities of $Y$ into cyclic quotient singularities, where a pair $(b_i,r_i)$ corresponds to a (virtual) orbifold point $Q_i$ of type $\frac{1}{r_i}(1,-1,b_i)$. 

Recall that for a Weil divisor $D$ on $X$, $$H^0(X, D)=\{f\in \mathbb{C}(X)^{\times}\mid \text{div}(f)+D\geq 0\}\cup \{0\}.$$
By Reid's Riemann--Roch formula and the Kawamata--Viehweg vanishing theorem, for any positive integer $m$, 
\begin{align*}
h^0(X, -mK_X)={}&\chi(X, \OO_X(-mK_X))\\={}&\frac{1}{12}m(m+1)(2m+1)(-K_X)^3+(2m+1)-l(m+1)
\end{align*}
where
$l(m+1)=\sum_i\sum_{j=1}^m\frac{\overline{jb_i}(r_i-\overline{jb_i})}{2r_i}$ and the first sum runs over all orbifold points in Reid basket (\cite[2.2]{CJ16}). Here $\overline{jb_i}$ means the smallest non-negative residue of $jb_i \bmod r_i$.

We will freely use the following lemma to compute anti-pluri-genera.
\begin{lem}\label{lem Xd RR}
Let $X$ be a canonical weak Fano $3$-fold such that $(-K_X)^3=(-K_{X_{d}})^3$ and $B_{X}=B_{X_{d}}$ for some weighted hypersurface
$$X_{d}\subset \mathbb{P}(a_0,a_1,a_2,a_3, a_4)$$ 
as in Iano-Fletcher's list \cite[16.6]{IF00}. Then
$$
\sum_{m\geq 0}h^0(X, -mK_X)q^m=\frac{1-q^{d}}{(1-q^{a_0})(1-q^{a_1})(1-q^{a_2})(1-q^{a_3})(1-q^{a_4})}.
$$
 \end{lem}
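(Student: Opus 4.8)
The plan is to reduce the computation for the abstract threefold $X$ to the same computation for the concrete hypersurface $X_d$, and then to read off the Hilbert series of $X_d$ from its homogeneous coordinate ring.

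First I would observe that the right-hand side is exactly $\sum_{m\ge 0}h^0(X_d,-mK_{X_d})q^m$. Indeed, for every $m\ge 1$ the divisor $-(m+1)K_X$ is nef and big, so Kawamata--Viehweg vanishing gives $h^i(X,-mK_X)=0$ for $i>0$ and hence $h^0(X,-mK_X)=\chi(X,\OO_X(-mK_X))$; by Reid's Riemann--Roch formula quoted above this number depends only on $(-K_X)^3$ and the Reid basket $B_X$. The same holds for $X_d$, and by hypothesis $(-K_X)^3=(-K_{X_d})^3$ and $B_X=B_{X_d}$, so $h^0(X,-mK_X)=h^0(X_d,-mK_{X_d})$ for all $m\ge 1$; the two series also agree in degree $0$, where both values equal $1$. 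Thus it suffices to compute the anti-pluri-canonical series of the weighted hypersurface $X_d$ itself.

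For the second step I would use that every $X_d$ occurring in Iano-Fletcher's list is a general, hence quasi-smooth and well-formed, weighted hypersurface, and that $\sum_{i=0}^{4}a_i-d=1$ for each of them. By adjunction $-K_{X_d}=\OO_{X_d}(1)$, so the anti-pluri-canonical ring $\bigoplus_{m\ge 0}H^0(X_d,-mK_{X_d})$ coincides with the section ring $\bigoplus_{m\ge 0}H^0(X_d,\OO_{X_d}(m))$. For a well-formed quasi-smooth hypersurface this section ring equals the homogeneous coordinate ring $S/(F)$, where $S=\mathbb{C}[x_0,\dots,x_4]$ is graded by $\deg x_i=a_i$ and $\deg F=d$. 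The short exact sequence of graded $S$-modules
$$0\longrightarrow S(-d)\xrightarrow{\ \cdot F\ }S\longrightarrow S/(F)\longrightarrow 0$$
then yields
$$\sum_{m\ge 0}h^0(X_d,-mK_{X_d})q^m=\frac{1-q^{d}}{\prod_{i=0}^{4}(1-q^{a_i})},$$
which is the desired identity.

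The only delicate point is the identification $H^0(X_d,\OO_{X_d}(m))=(S/(F))_m$ in every degree $m$, that is, the weighted projective normality of $X_d$ together with the absence of extra sections arising from the non-trivial orbifold structure of the ambient space; this is where well-formedness and quasi-smoothness are essential, and I would invoke the standard theory of \cite{IF00}. If one prefers to avoid this geometric input, the identity can instead be verified purely formally by substituting Reid's expression for $l(m+1)$ into the Riemann--Roch formula and matching the coefficients of the two power series; this route is elementary but computational, and the structural argument above seems cleaner.
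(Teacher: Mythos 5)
Your proposal is correct and follows essentially the same route as the paper: first reduce to $X_d$ by noting that Reid's Riemann--Roch formula together with Kawamata--Viehweg vanishing makes $h^0(X,-mK_X)$ depend only on $(-K_X)^3$ and $B_X$, and then compute the anti-pluri-canonical series of the weighted hypersurface $X_d$ via $\mathcal{O}_{X_d}(-K_{X_d})=\mathcal{O}_{\mathbb{P}}(1)|_{X_d}$. The only difference is that the paper simply cites \cite[Theorem~3.4.4]{Dol82} for the last step, whereas you unpack that citation by identifying the section ring with $S/(F)$ and using the exact sequence $0\to S(-d)\to S\to S/(F)\to 0$.
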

\begin{proof}
By Reid's Riemann--Roch formula, $h^0(X, -mK_X)$ depends only on $(-K_X)^3$ and $B_X$. Note that $\mathcal{O}_{{X_{d}}}(-K_{{X_{d}}})=\mathcal{O}_{\mathbb{P}}(1)|_{{X_{d}}}$ where $\mathcal{O}_{\mathbb{P}}(1)$ is the natural twisting sheaf of $\mathbb{P}(a_0,a_1,a_2,a_3, a_4)$.
So 
\begin{align*}
\sum_{m\geq 0}h^0(X, -mK_X)q^m={}&\sum_{m\geq 0}h^0({X_{d}}, \mathcal{O}_{\mathbb{P}}(m)|_{{X_{d}}} )q^m,
\end{align*}
and the last series can be computed by
by \cite[Theorem~3.4.4]{Dol82}.
\end{proof}

 \section{Generic finiteness and birationality}\label{section 3}
 In this section we recall the criteria for generic finiteness and birationality of anti-pluri-canonical systems of canonical weak Fano $3$-folds from \cite{CJ16, Phd}.

 We refer to \cite{CJ16} for basic definitions.
 Here we should remind that in \cite{CJ16}, a $\bQ$-factorial terminal Fano $3$-fold with $\rho=1$ is called a {\it $\mathbb{Q}$-Fano $3$-fold}, and a terminal weak Fano $3$-fold is called a {\it weak $\mathbb{Q}$-Fano $3$-fold}. As a canonical weak Fano $3$-fold has a terminalization which is a crepant birational morphism from a terminal weak Fano $3$-fold, all results in \cite{CJ16}, which hold for terminal weak Fano $3$-folds, also hold for canonical weak Fano $3$-folds (see \cite[Remark~1.9]{CJ16}).

 Let $X$ be a canonical weak Fano $3$-fold and $m$ a positive integer such that $h^0(X, -mK_X)\geq
2$. We consider the rational map $\varphi_{-m}$ defined by the linear system $|-mK_X|$. We say that $|-mK_X|$ defines a generically finite map (resp. a birational map), if $\varphi_{-m}$ is 
 generically finite (resp. birational) onto its image. We say that $|-mK_X|$ is {\it composed with a pencil} if the image of $\varphi_{-m}$ is a curve. 
 
 \begin{set}\label{setup}
 \begin{enumerate}
 \item Let $X$ be a canonical weak Fano $3$-fold and $m_0$ a positive integer such that $h^0(X, -m_0K_X)\geq
2$. 

\item Suppose that $m_1\geq m_0$ is an integer with $h^0(X, -m_1K_X)\geq 2$ such that $|-m_1K_X|$ is not composed with a pencil.

\item Take a resolution $\pi: W\to X$ such that
$$
\pi^*(-mK_X)=M_{m}+F_{m}
$$
where $M_m$ is free and $F_{m}$ is the fixed part for all $m_0\leq m\leq m_1$. 

\item Pick a generic irreducible element $S$ of $|M_{m_0}|$ (a generic irreducible element means an irreducible component of a general element in \cite{CJ16}). We have $m_0\pi^*(-K_X)\geq S$.

\item 
Define the real number 
$$\mu_0:=\text{inf}\{t\in \bQ^+ \mid t\pi^*(-K_X)-S\sim_{\bQ} \text{effective}\ \bQ\text{-divisor}\}.$$
It is clear that $\mu_0\leq m_0$.

\item 
By the assumption on $|-m_1K_X|$, we know that $|{M_{m_1}}|_{S}|$ is a non-trivial base point free linear system. Denote by
$C$ a generic irreducible element of $|{M_{m_1}}|_{S}|$.

\item 
Define
\begin{align*}
\zeta{}&:=(\pi^*(-K_X)\cdot C);\\
\varepsilon(m){}&:=(m+1-\mu_0-m_1)\zeta.
\end{align*}
 \end{enumerate}

 \end{set}

Under this setup, we recall 2 propositions from \cite{CJ16, Phd}.

 \begin{prop}[{\cite[Proposition~5.7]{CJ16}}]\label{prop zeta}
Let $X$ be a canonical weak Fano $3$-fold. Keep the notation in Setup~\ref{setup}.
\begin{enumerate}
\item If $g(C)>0$, then $\zeta\geq \frac{2g(C)-1}{\mu_0+m_1}$.
\item If $g(C)=0$, then $\zeta\geq 2$.
\end{enumerate}
\end{prop}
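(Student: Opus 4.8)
The plan is to prove Proposition~\ref{prop zeta} by studying the restricted geometry on the generic irreducible surface $S\in|M_{m_0}|$, using adjunction to pass from the curve $C\subset S$ to numerical inequalities involving $\zeta=(\pi^*(-K_X)\cdot C)$. First I would set up the key comparison divisor on $S$. By construction $m_0\pi^*(-K_X)\geq S$, and the definition of $\mu_0$ says that $\mu_0\pi^*(-K_X)-S$ is a limit of effective $\bQ$-divisors; combining these, for any rational $t>\mu_0$ the $\bQ$-divisor $(t)\pi^*(-K_X)|_S - S|_S$ should be pseudo-effective (or $\bQ$-effective), so that $\left((t\pi^*(-K_X)-S)|_S\cdot C\right)\geq 0$. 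Since $C$ is a nef curve on $S$ (it is a generic irreducible element of the base-point-free system $|M_{m_1}|_S|$), this yields $(S|_S\cdot C)\leq t\,\zeta$ for all $t>\mu_0$, hence $(S|_S\cdot C)\leq \mu_0\zeta$.

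The heart of the argument is then the adjunction/degree computation for $C$ on $S$. I would apply adjunction on the smooth surface $S$ (after possibly passing to a higher resolution so that $S$ is smooth and $C$ is a smooth curve): writing $K_S=(K_W+S)|_S$ and noting that $C$ moves in a base-point-free pencil on $S$ so $(C^2)\geq 0$, the genus formula gives
\begin{align*}
2g(C)-2 &= \left((K_S+C)\cdot C\right)=\left((K_W+S)|_S\cdot C\right)+(C^2)\\
&\geq \left((K_W+S)|_S\cdot C\right).
\end{align*}
Now I would bound $(K_W|_S\cdot C)$ from above using $K_W\leq \pi^*K_X$ (since $X$ has canonical, hence terminal after terminalization, singularities and $\pi$ is a resolution of a canonical pair, the discrepancies are nonnegative so $K_W-\pi^*K_X$ is effective), giving $(K_W|_S\cdot C)\leq (\pi^*K_X|_S\cdot C)=-\zeta$. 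Substituting the bound $(S|_S\cdot C)\leq \mu_0\zeta$ from the previous paragraph yields $2g(C)-2\leq \left(\mu_0+\text{(contribution from }m_1)\right)\zeta$ up to organizing the constants; tracking the precise appearance of $m_1$ (via $C$ being cut out by $|M_{m_1}|_S|$ and $M_{m_1}\leq m_1\pi^*(-K_X)$) should produce the stated denominator $\mu_0+m_1$. Rearranging gives $\zeta\geq\frac{2g(C)-1}{\mu_0+m_1}$ in case $g(C)>0$.

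For the case $g(C)=0$, the genus formula with $(C^2)\geq 0$ and $g(C)=0$ forces $\left((K_W+S)|_S\cdot C\right)\leq -2$, and since $C$ is a free rational curve on $S$ one expects $(\pi^*(-K_X)\cdot C)\geq 2$ directly; here I would use that $C$ covers $S$ and $-K_X$ is nef and big together with the fact that a free rational curve on a variety with nef anticanonical class has anticanonical degree at least $2$ (this is essentially the bend-and-break lower bound forced by $C$ moving), giving $\zeta\geq 2$. The main obstacle I anticipate is making the divisor $(t\pi^*(-K_X)-S)|_S$ genuinely $\bQ$-effective rather than merely pseudo-effective, since $\mu_0$ is defined as an infimum and the restriction of a pseudo-effective divisor to $S$ need not stay pseudo-effective; handling the boundary behavior at $t=\mu_0$ and ensuring $C$ is not contained in the negative part of the relevant decomposition (so that the intersection inequality survives restriction) will require care, likely by perturbing $t$ slightly above $\mu_0$ and passing to the limit, and by choosing $C$ generic enough to avoid any fixed curves.
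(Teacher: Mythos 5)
The paper itself does not prove this proposition: it is quoted verbatim from \cite[Proposition~5.7]{CJ16}, where the proof is a Kawamata--Viehweg vanishing argument (restricting anti-pluri-canonical systems from $W$ to $S$ and then to $C$, and using Riemann--Roch on $C$), not a direct adjunction computation. So your proposal is necessarily a new route, and as written it has a gap in part (1) that I do not see how to close.

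The central problem is that your inequalities point the wrong way. The target $\zeta\geq\frac{2g(C)-1}{\mu_0+m_1}$ is an \emph{upper} bound on $2g(C)-2$ in terms of $\zeta$, but your adjunction step $2g(C)-2=\bigl((K_W+S)|_S\cdot C\bigr)+(C^2)\geq\bigl((K_W+S)|_S\cdot C\bigr)$, obtained by discarding $(C^2)\geq 0$, is a \emph{lower} bound for $2g(C)-2$; substituting the upper bound $(S|_S\cdot C)\leq\mu_0\zeta$ into the right-hand side of a ``$\geq$'' yields no conclusion comparing $2g(C)-2$ with $(\mu_0+m_1)\zeta$. Moreover the asserted estimate $(K_W|_S\cdot C)\leq(\pi^*K_X|_S\cdot C)=-\zeta$ is backwards: canonical singularities give $K_W=\pi^*K_X+\sum_i a_iE_i$ with $a_i\geq 0$, and since the generic $C$ is nef on $S$ this forces $(K_W\cdot C)\geq-\zeta$, not $\leq-\zeta$. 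Even after reorienting everything correctly --- using $(C^2)\leq(M_{m_1}|_S\cdot C)\leq m_1\zeta$ and your (essentially correct) bound $(S|_S\cdot C)\leq\mu_0\zeta$ --- you would still need an upper bound of the form $(K_W\cdot C)\leq-1$, and there is no a priori control on the exceptional contribution $\sum_i a_i(E_i\cdot C)\geq 0$, so the adjunction route does not terminate. Part (2) is in better shape: from $g(C)=0$ and $(C^2)\geq 0$, adjunction on $S$ gives $(-K_S\cdot C)=2+(C^2)\geq 2$, and then $\zeta\geq(-K_W\cdot C)=(-K_S\cdot C)+(S|_S\cdot C)\geq 2$ using $(S|_S\cdot C)\geq 0$ (nefness of $M_{m_0}$) and $(E_i\cdot C)\geq 0$; but your appeal to ``a free rational curve has anticanonical degree at least $2$'' is not the right tool here, since $C$ moves only inside the surface $S$ and not in $W$, and the relevant divisor is $\pi^*(-K_X)$ rather than $-K_W$ or $-K_S$.
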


 \begin{prop}\label{prop gen finite bir}
Let $X$ be a canonical weak Fano $3$-fold with $h^0(X, -K_X)>0$. Keep the notation in Setup~\ref{setup}. Take an integer $m\geq m_0+m_1+1$.
\begin{enumerate}
\item If $\varepsilon(m) > \max\{2-g(C), 0\}$, then $|-mK_X|$ defines a generically finite map.
 \item If $\varepsilon(m) > 2$, then $|-mK_X|$ defines a birational map.
\end{enumerate}
\end{prop}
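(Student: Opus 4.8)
The plan is to run the standard two-step restriction argument of Chen--Chen, as adapted to weak Fano $3$-folds in \cite{CJ16}, which transfers the problem from $W$ down to a degree estimate on the generic curve $C$ via the tower $W\rightsquigarrow S\rightsquigarrow C$. The reduction rests on an elementary fact about curves: for a smooth projective curve $C$ of genus $g$ and a divisor $D$ on $C$, the system $|K_C+D|$ satisfies $h^0(C,K_C+D)\ge 2$ (so the associated map is non-constant) as soon as $\deg D>\max\{2-g,0\}$, and $|K_C+D|$ is very ample as soon as $\deg D>2$; these are exactly the two thresholds in the statement. Thus the whole proof reduces to exhibiting the restriction of $|-mK_X|$ to a generic $C$ as a subsystem containing $|K_C+D|$ for a nef divisor $D$ of degree as close as we wish to $\varepsilon(m)$.

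First I would dispose of the two separation statements. Since $h^0(X,-m_0K_X)\ge 2$, the generic irreducible elements $S$ of $|M_{m_0}|$ move and sweep out $W$, so for $m\ge m_0$ the system $|M_m|$ separates distinct generic $S$; and since $|-m_1K_X|$ is \emph{not} composed with a pencil, $|M_{m_1}|_S|$ is a non-trivial base point free system on $S$ whose generic element $C$ moves in at least a pencil, so $\varphi_{-m}|_S$ separates distinct generic $C$ for $m$ large. Granting these, $\varphi_{-m}$ is generically finite (resp.\ birational) once $\varphi_{-m}|_C$ is non-constant (resp.\ birational), and the hypothesis $m\ge m_0+m_1+1$ is precisely what makes both separations and the positivity needed below available at once; the assumption $h^0(X,-K_X)>0$ is what legitimizes the comparisons $m_0\pi^*(-K_X)\ge S$ and $m_1\pi^*(-K_X)|_S\ge C$ used throughout.

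The heart is the degree estimate on $C$, obtained by two applications of the Kawamata--Viehweg vanishing theorem. Writing $K_W=\pi^*K_X+E_\pi$ with $E_\pi\ge 0$ exceptional, I would use the identity
$$\pi^*(-mK_X)-S\sim_{\bQ}K_W+\big((m+1)\pi^*(-K_X)-S-E_\pi\big),$$
and, for a rational $t>\mu_0$, split $(m+1)\pi^*(-K_X)-S=(m+1-t)\pi^*(-K_X)+\big(t\pi^*(-K_X)-S\big)$, where the last summand is $\bQ$-effective by the definition of $\mu_0$. After rounding and absorbing the exceptional $E_\pi$, the middle term is nef and big, so vanishing yields a surjection $H^0(W,\,\cdot\,)\twoheadrightarrow H^0(S,\,\cdot\,)$ and, by adjunction, replaces $K_W|_S$ by $K_S$. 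Repeating the construction on $S$ with $C$ in place of $S$ and $m_1\pi^*(-K_X)|_S\ge C$ in place of the bound on $S$, I would get a further surjection onto $H^0(C,K_C+D)$, where $D$ is nef with
$$\deg D=\big((m+1)-t-m_1\big)\big(\pi^*(-K_X)\cdot C\big).$$
Letting $t\downarrow\mu_0$ makes $\deg D$ approach $\varepsilon(m)=(m+1-\mu_0-m_1)\zeta$ from below; the two restriction steps contribute the full $\pi^*(-K_X)$ bonus to the positive side (the source of the $+1$), while killing $S$ and $C$ costs $\mu_0$ and $m_1$ respectively.

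The main obstacle is exactly the bookkeeping in this last step: one must guarantee that the auxiliary $\bQ$-divisors are genuinely nef and big rather than merely big, which forces careful control of the round-up/fractional parts, of the effective exceptional divisor $E_\pi$ (absorbed using $\pi$-exceptionality), and of the limiting process $t\downarrow\mu_0$ forced by $\mu_0$ being an infimum. Because the hypotheses $\varepsilon(m)>\max\{2-g(C),0\}$ and $\varepsilon(m)>2$ are \emph{strict}, the $\varepsilon$-loss incurred by taking $t>\mu_0$ is harmless: for $t$ close enough to $\mu_0$ we still have $\deg D>\max\{2-g(C),0\}$ (resp.\ $\deg D>2$). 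The curve statement recalled at the outset then gives that $\varphi_{-m}|_C$ is non-constant (resp.\ birational), and combined with the two separation statements this yields generic finiteness (resp.\ birationality) of $\varphi_{-m}$, completing the proof.
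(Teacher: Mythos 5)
Your outline is correct and follows the same two-step restriction-plus-Kawamata--Viehweg argument that underlies the results the paper invokes: the paper's own proof simply cites \cite[Theorem~5.5]{CJ16} and \cite[Theorem~4.4.4]{Phd} and checks that their hypotheses hold for $m\geq m_0+m_1+1$ because $h^0(X,-K_X)>0$ allows taking $k_0=1$ in \cite[Propositions~5.8 and 5.10]{CJ16}. You have in effect unwound that cited proof, and your treatment of the separation steps, the limit $t\downarrow\mu_0$, and the genus-dependent degree thresholds on $C$ matches the original argument.
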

\begin{proof}
This is by \cite[Theorem~5.5]{CJ16} and \cite[Theorem~4.4.4]{Phd}. Here we only need to explain that when $m\geq m_0+m_1+1$, \cite[Assumption~5.3]{CJ16} and \cite[Assumption~4.4.3]{Phd} hold. By the proof of \cite[Propositions~5.8 and 5.10]{CJ16}, these assumptions hold as long as $m\geq m_0+m_1+k_0$ where $k_0$ is an integer such that $h^0(X, -kK_X)>0$ for all $k\geq k_0$ (we take $k_0=6$ in \cite{CJ16, Phd}). As $h^0(X, -K_X)>0$, we can take $k_0=1$ in this proposition.
\end{proof}

By Propositions~\ref{prop zeta} and \ref{prop gen finite bir}, we get the following consequence on the behavior of anti-pluri-canonical systems.
 \begin{cor}\label{cor bir criterion}
Let $X$ be a canonical weak Fano $3$-fold with $h^0(X, -K_X)>0$. Keep the notation in Setup~\ref{setup}. 
\begin{enumerate}
\item If $m\geq 2m_0+2m_1$, then $|-mK_X|$ defines a generically finite map.
\item If $g(C)\neq 1$ and $m\geq m_0+m_1+1$, then $|-mK_X|$ defines a generically finite map.
 \item If $m\geq 3m_0+3m_1$, then $|-mK_X|$ defines a birational map.
\end{enumerate}
\end{cor}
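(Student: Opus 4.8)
The goal is to derive Corollary~\ref{cor bir criterion} as a formal consequence of Propositions~\ref{prop zeta} and~\ref{prop gen finite bir}. The whole proof is essentially a bookkeeping exercise: I need to combine the lower bound on $\zeta$ coming from the genus of $C$ with the definition $\varepsilon(m)=(m+1-\mu_0-m_1)\zeta$, and then check that the hypotheses on $m$ force $\varepsilon(m)$ past the thresholds $\max\{2-g(C),0\}$ (for generic finiteness) or $2$ (for birationality) appearing in Proposition~\ref{prop gen finite bir}.

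Let me plan the casework. The plan is to split according to the value of $g(C)$, using that $\mu_0\leq m_0$ throughout.

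Parts (1) and (3): I want uniform bounds valid for all genera. First I would record that $\varepsilon(m)=(m+1-\mu_0-m_1)\zeta\geq(m+1-m_0-m_1)\zeta$ since $\mu_0\leq m_0$. For part (3) I would like $\varepsilon(m)>2$. When $g(C)=0$ Proposition~\ref{prop zeta}(2) gives $\zeta\geq 2$, so $\varepsilon(m)>2$ as soon as $m+1-m_0-m_1>1$, i.e. $m\geq m_0+m_1+1$, which is implied by $m\geq 3m_0+3m_1$. When $g(C)\geq 1$, Proposition~\ref{prop zeta}(1) gives $\zeta\geq\frac{2g(C)-1}{\mu_0+m_1}\geq\frac{1}{\mu_0+m_1}\geq\frac{1}{m_0+m_1}$; combined with $m+1-m_0-m_1\geq 2(m_0+m_1)+1$ (which follows from $m\geq 3m_0+3m_1$) this yields $\varepsilon(m)\geq\frac{2(m_0+m_1)+1}{m_0+m_1}>2$. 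Either way Proposition~\ref{prop gen finite bir}(2) applies, giving part (3). For part (1) I would run the same computation against the threshold $\max\{2-g(C),0\}$. If $g(C)\geq 2$ the threshold is $0$ and $\varepsilon(m)>0$ holds trivially once $m\geq m_0+m_1+1$. If $g(C)=1$ the threshold is $1$, $\zeta\geq\frac{1}{m_0+m_1}$, and $m\geq 2m_0+2m_1$ gives $m+1-m_0-m_1\geq m_0+m_1+1$, so $\varepsilon(m)\geq\frac{m_0+m_1+1}{m_0+m_1}>1$. If $g(C)=0$ the threshold is $2$, $\zeta\geq 2$, and $m\geq 2m_0+2m_1\geq m_0+m_1+1$ already gives $\varepsilon(m)>2$. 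So Proposition~\ref{prop gen finite bir}(1) yields part (1).

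Part (2) is the cleanest: when $g(C)\neq 1$, the threshold $\max\{2-g(C),0\}$ is attained either by $\zeta\geq 2$ in the $g(C)=0$ case (threshold $2$, needing $m+1-m_0-m_1>1$) or by threshold $0$ in the $g(C)\geq 2$ case (needing only $\varepsilon(m)>0$), and in both cases $m\geq m_0+m_1+1$ suffices; I would just note that $m\geq m_0+m_1+1$ forces $m+1-\mu_0-m_1\geq m+1-m_0-m_1\geq 2>0$, and in the genus-zero subcase $\varepsilon(m)\geq 2\cdot(m+1-m_0-m_1)$ which exceeds $2$. I do not expect any genuine obstacle here, since all the analytic content is packaged into the two cited propositions; the only thing requiring care is getting the genus casework exhausted correctly and consistently applying $\mu_0\leq m_0$ so that the inequalities run in the right direction. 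The mild subtlety worth double-checking is the boundary behaviour at $g(C)=1$, which is precisely why part (2) excludes that genus and part (1) instead demands the stronger bound $m\geq 2m_0+2m_1$.
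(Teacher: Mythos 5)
Your proposal is correct and is exactly the (omitted) derivation the paper intends: the paper states the corollary as an immediate consequence of Propositions~\ref{prop zeta} and \ref{prop gen finite bir}, and your case analysis on $g(C)$ combined with $\mu_0\leq m_0$ fills in precisely that bookkeeping. The only implicit point worth noting is that replacing $\mu_0$ by $m_0$ in the factor $(m+1-\mu_0-m_1)$ requires $\zeta\geq 0$, which holds since $\pi^*(-K_X)$ is nef (and is strictly positive in every case you treat).
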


 \section{Proofs of main theorems}

 In this section, we prove Theorems~\ref{mainthm} and \ref{mainthm2}.
 We will analyze the behavior of anti-pluri-canonical systems.
 \begin{lem}\label{lem 1}
Let $X$ be a canonical weak Fano $3$-fold such that $(-K_X)^3=(-K_{X_{6d}})^3$ and $B_{X}=B_{X_{6d}}$ for some $$X_{6d}\subset \mathbb{P}(1,a,b,2d, 3d)$$ as in Table~\ref{tableA}. Then

\begin{enumerate}
\item $h^0(X, -kK_X)>0$ for all positive integer $k$;
\item $h^0(X, -aK_X)\geq 2$.
\end{enumerate}
\end{lem}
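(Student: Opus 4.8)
The plan is to deduce both assertions purely from the anti-pluri-genus generating function, which Lemma~\ref{lem Xd RR} already makes available. Because $X$ shares its anti-canonical volume $(-K_X)^3$ and Reid basket with $X_{6d}\subset\mathbb{P}(1,a,b,2d,3d)$, that lemma gives
\[
\sum_{m\geq 0}h^0(X,-mK_X)\,q^m=\frac{1-q^{6d}}{(1-q)(1-q^a)(1-q^b)(1-q^{2d})(1-q^{3d})}=:P(q),
\]
so it suffices to understand the coefficients of the rational series $P(q)$. Recall that for every entry of Table~\ref{tableA} one has $a+b=d$ with $a,b\geq 1$, so in particular $a\leq d-1<3d$.

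The first step is to cancel the numerator against one denominator factor. Since $1-q^{6d}=(1-q^{3d})(1+q^{3d})$, we obtain
\[
P(q)=\frac{1+q^{3d}}{(1-q)(1-q^a)(1-q^b)(1-q^{2d})}.
\]
Writing $\dfrac{1}{(1-q)(1-q^a)(1-q^b)(1-q^{2d})}=\sum_{n\geq 0}c_nq^n$, each $c_n$ is a non-negative integer, and the factor $\dfrac{1}{1-q}=\sum_{i\geq 0}q^i$ forces $c_n\geq 1$ for all $n\geq 0$ (take the contribution $i=n$ with the three remaining exponents zero). Multiplying by $1+q^{3d}$ only enlarges coefficients, so $h^0(X,-kK_X)=[q^k]P(q)\geq c_k\geq 1$ for every $k\geq 1$, which proves part (1).

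For part (2) I would extract the single coefficient $[q^a]P(q)$. Since $a<3d$, the monomial $q^{3d}$ contributes nothing in degree $a$, so $[q^a]P(q)=c_a$, and $c_a$ equals the number of lattice points $(i,j,\ell,m)\in\mathbb{Z}_{\geq 0}^4$ with $i+ja+\ell b+2dm=a$. The two distinct solutions $(a,0,0,0)$ and $(0,1,0,0)$ already give $c_a\geq 2$, hence $h^0(X,-aK_X)\geq 2$. The whole argument is combinatorial bookkeeping once Lemma~\ref{lem Xd RR} is in hand, so there is no genuine geometric obstacle; the only point that requires care is the \emph{strict} positivity in part (1), which hinges on the presence of the weight-one coordinate, i.e. on the factor $\frac{1}{1-q}$ surviving in the denominator of $P(q)$.
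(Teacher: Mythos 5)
Your proof is correct and takes the same route as the paper, whose entire proof is the single sentence that the statement ``can be directly verified by Lemma~\ref{lem Xd RR}''; you have simply carried out the verification the paper leaves implicit. The cancellation $1-q^{6d}=(1-q^{3d})(1+q^{3d})$, the observation that the factor $\frac{1}{1-q}$ forces every coefficient to be at least $1$, and the two distinct lattice points $(a,0,0,0)$ and $(0,1,0,0)$ in degree $a$ are exactly the required bookkeeping.
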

\begin{proof}
This can be directly verified by Lemma~\ref{lem Xd RR}.
\end{proof}

 \begin{lem}\label{lem non-pencil}
Let $X$ be a $\bQ$-factorial terminal Fano $3$-fold with $\rho(X)=1$ such that $(-K_X)^3=(-K_{X_{6d}})^3$ and $B_{X}=B_{X_{6d}}$ for some $$X_{6d}\subset \mathbb{P}(1,a,b,2d, 3d)$$ as in Table~\ref{tableA}. Then $|-bK_X|$ is not composed with a pencil.
\end{lem}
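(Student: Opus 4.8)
The plan is to translate the statement into a question about transcendence degrees and then to obstruct the ``composed with a pencil'' scenario by an intersection-theoretic estimate. First I would use Lemma~\ref{lem Xd RR} to read off the low anti-pluri-genera: since $a+b=d$ one computes $(-K_X)^3=\frac{6d}{ab\cdot 2d\cdot 3d}=\frac{1}{abd}$, and the generating function gives $h^0(X,-K_X)=1$, $h^0(X,-aK_X)=2$, and $h^0(X,-bK_X)=\lfloor b/a\rfloor+2$ (no variable of weight $2d$ or $3d$, nor the degree-$6d$ relation, contributes below degree $b$). Fix a generator $s_0$ of $H^0(-K_X)$, a second section $s_1\in H^0(-aK_X)$, and a section $s_2\in H^0(-bK_X)$ not lying in the span of the products $s_0^{b-ja}s_1^{j}$ for $0\le j\le\lfloor b/a\rfloor$; these products are linearly independent because $u:=s_1/s_0^{a}$ is a non-constant rational function, so $s_2$ exists by the dimension count. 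Writing $v:=s_2/s_0^{b}$, the map $\varphi_{-b}$ is, up to the Veronese-type coordinates $u,u^2,\dots$, given by $(u,v)$, so $|-bK_X|$ is composed with a pencil if and only if $v$ is algebraic over $\mathbb{C}(u)$, i.e. the rational map $X\dashrightarrow\mathbb{P}(1,a,b)$ determined by $[s_0:s_1:s_2]$ is not generically finite onto its image.

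Next I would argue by contradiction. Assume $|-bK_X|$ is composed with a pencil and take a resolution $\pi\colon W\to X$ with $\pi^*(-bK_X)=M_b+F_b$ as in Setup~\ref{setup}. The associated fibration $W\to B$ onto a curve has general fibre a prime divisor $S$, and the standard bound for systems composed with a pencil gives $M_b\ge (h^0(X,-bK_X)-1)S=(\lfloor b/a\rfloor+1)S$. Intersecting the inequality $\pi^*(-bK_X)\ge (\lfloor b/a\rfloor+1)S$ with the nef class $\pi^*(-K_X)^2$ and using $(-K_X)^3=\frac{1}{abd}$ yields
\[
(\pi_*S)\cdot(-K_X)^2\ \le\ \frac{b\,(-K_X)^3}{\lfloor b/a\rfloor+1}\ <\ \frac{1}{bd},
\]
because $\lfloor b/a\rfloor+1>b/a$. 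Thus the general fibre of the pencil has very small degree against $(-K_X)^2$; writing $\pi_*S\equiv\beta(-K_X)$ (legitimate since $\rho(X)=1$), this reads $\beta<a$.

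The remaining, and hardest, step is to produce the matching lower bound $\beta\ge a$ and reach a contradiction. The natural idea is to identify the pencil: since the degree-$b$ products $s_0^{b-ja}s_1^{j}$ are already composed with the pencil defined by $u=s_1/s_0^{a}$ (equivalently by $|-aK_X|$, which is itself a pencil because $h^0(-aK_X)=2$), the hypothesis that $v$ is algebraic over $\mathbb{C}(u)$ forces the fibration $W\to B$ to be this same pencil, so that $\pi_*S$ is comparable to the general fibre of $|-aK_X|$. The obstacle is that pure numerics do not bound $\beta$ from below: if $|-aK_X|$ is imprimitive, its moving part could be a large multiple of a small prime fibre, or $B\to\mathbb{P}^1_u$ could have degree $>1$, making $\beta$ a small fraction, since we only know the numerical data of $X$ and not that $-K_X$ generates $\mathrm{Cl}(X)$. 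To overcome this I would feed the fibre $S$ into the machinery of Section~\ref{section 3}: realising $S$ as the generic irreducible element attached to the pencil gives $\mu_0\le b/(\lfloor b/a\rfloor+1)<a$, and combining Proposition~\ref{prop zeta} with the bound on $\zeta=(\pi^*(-K_X)\cdot C)$ should force $\beta$ upward. Equally, I would exploit that a genuinely new generator $s_3$ of weight $2d$ appears in $H^0(-2dK_X)$ (again by Lemma~\ref{lem Xd RR}), so the pencil cannot absorb the full anti-canonical ring; tracking the resulting relation among $s_0,s_1,s_2$ against the Hilbert series of $\bigoplus_m H^0(-mK_X)$ (which by Lemma~\ref{lem Xd RR} is that of the weighted hypersurface) should contradict the Krull dimension $\dim\Proj\bigoplus_m H^0(-mK_X)=3$. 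I expect the control of the fibre class $\beta$ --- i.e. ruling out a small or imprimitive pencil --- to be the crux, precisely the \emph{detailed discussion of anti-pluri-canonical systems} flagged in the introduction, and the step where $\rho(X)=1$ and $\mathbb{Q}$-factoriality are essential.
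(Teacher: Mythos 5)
Your reduction to transcendence degrees and the upper bound $\beta<a$ on the fibre class are fine (modulo the fact that your dimension counts $h^0(X,-K_X)=1$, $h^0(X,-aK_X)=2$ implicitly assume $a>1$, whereas half the rows of Table~\ref{tableA} have $a=1$ or even $a=b=1$). But the proof has a genuine gap at exactly the step you flag as the crux: you never establish the matching lower bound on the fibre class, and neither of the two strategies you sketch can supply it. Feeding $S$ into Setup~\ref{setup} and Proposition~\ref{prop zeta} is circular, because that machinery takes as input an integer $m_1$ for which $|-m_1K_X|$ is \emph{already known} not to be composed with a pencil --- precisely what is to be proved. The Hilbert-series/Krull-dimension argument also fails: Lemma~\ref{lem Xd RR} only fixes the dimensions $h^0(X,-kK_X)$, and an algebraic relation among $s_0,s_1,s_2$ in some degree $<2d$ is perfectly compatible with those dimensions, since the missing monomials could be compensated by new generators of $R(X,-K_X)$ appearing in intermediate degrees; the full anti-canonical ring can have Krull dimension $4$ while $\varphi_{-b}$ still has one-dimensional image. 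As you yourself observe, pure numerics cannot exclude a pencil whose general fibre is a small fraction of $-K_X$ in $\mathrm{Cl}(X)\otimes\bQ$.

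The paper closes this gap with a geometric input your argument lacks: \cite[Theorem~3.2]{CJ16}, a generalization of Alexeev's theorem on $\bQ$-Fano $3$-folds, which uses the hypotheses ($\bQ$-factorial, terminal, $\rho=1$) to control $|-K_X|$ itself --- it is not composed with a pencil when $h^0(X,-K_X)=3$ (the case $a=b=1$), has no fixed part when $h^0(X,-K_X)=2$ (the case $a=1<b$), and has a prime divisor as its unique member when $h^0(X,-K_X)=1$ (the case $a>1$). Combined with \cite[Theorem~3.4]{CJ16} and the jump $h^0(X,-bK_X)=\lfloor b/a\rfloor+2$ against $h^0(X,-kK_X)=\lfloor k/a\rfloor+1$ for $k<b$ (from Lemma~\ref{lem Xd RR}), this yields the non-pencilness of $|-bK_X|$ directly, case by case. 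The author remarks immediately after the lemma that without this Alexeev-type input he knows of no way to prove the statement; your attempt, which stalls precisely there, illustrates why that remark is made.
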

\begin{proof}
If $b=1$, then $h^{0}(X, -K_X)=3$. Then $|-K_X|$ is not composed with a pencil by \cite[Theorem~3.2]{CJ16} (with $m=1$).

If $a=1<b$, then $h^0(X, -K_X)=2$. Then $|-K_X|$ has no fixed part by \cite[Theorem~3.2]{CJ16} (with $m=1$).
Moreover,
by Lemma~\ref{lem Xd RR}, $h^0(X, -kK_X)=k+1$ for $1\leq k \leq b-1$ and $h^0(X, -bK_X)=b+2$. Then $|-bK_X|$ is not composed with a pencil by \cite[Theorem~3.4]{CJ16} (with $m=n_0=1$ and $l_0=b$).

If $a>1$, then $h^0(X, -K_X)=1$. Then the unique element of $|-K_X|$ is a prime divisor by \cite[Theorem~3.2]{CJ16} (with $m=1$).
Moreover, 
by Lemma~\ref{lem Xd RR}, $$h^0(X, -kK_X)=\begin{cases}1 & \text{if }1\leq k\leq a-1; \\ \lfloor k/a\rfloor+1 & \text{if } a\leq k\leq b-1;\\\lfloor b/a\rfloor+2 & \text{if } k=b.\end{cases}$$
Then $|-bK_X|$ is not composed with a pencil by \cite[Theorem~3.4]{CJ16} (with $m=1$, $n_0=a$, and $l_0=b$).
\end{proof}

Lemma~\ref{lem non-pencil} is the only place that we use the condition that $X$ is $\mathbb{Q}$-factorial terminal with $\rho(X)=1$. Its proof relies heavily on \cite[Theorem~3.2]{CJ16} which generalizes \cite[Theorem~2.18]{Ale94}. Without this assumption, we have no idea how to show the non-pencilness of $|-bK_X|$ in Lemma~\ref{lem non-pencil}.
On the other hand, once we know the non-pencilness of $|-bK_X|$, then the geometry of $X$ is as good as we expect. So we make the following definition.

\begin{definition}
We say that a canonical weak Fano $3$-fold $X$ satisfies condition (IF$_{a, b}$) if the following conditions hold:
\begin{enumerate}
 \item $(-K_X)^3=(-K_{X_{6d}})^3$ and $B_{X}=B_{X_{6d}}$ for some $$X_{6d}\subset \mathbb{P}(1,a,b,2d, 3d)$$ as in Table~\ref{tableA}.
 \item $|-bK_X|$ is not composed with a pencil.
\end{enumerate}
\end{definition}

Then we can describe the behavior of anti-pluri-canonical systems of weak Fano $3$-folds satisfying condition (IF$_{a,b}$) which is the same as weighted hypersurfaces in Table~\ref{tableA} (compare \cite[Example~5.12]{CJ16}, \cite[Example~4.4.12]{Phd}).
\begin{lem}\label{lem gen finite}
Let $X$ be a canonical weak Fano $3$-fold satisfying condition (IF$_{a,b}$). Then 
\begin{enumerate}
 \item $|-2dK_X|$ defines a generically finite map;
 \item $|-3dK_X|$ defines a birational map.
\end{enumerate}
\end{lem}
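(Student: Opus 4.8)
The plan is to apply the numerical criteria from Corollary~\ref{cor bir criterion} with a careful choice of the parameters $m_0$ and $m_1$ in Setup~\ref{setup}, and then to sharpen the resulting bounds using the explicit value of $\zeta$ controlled by Proposition~\ref{prop zeta}. First I would set $m_0 = a$, which is legitimate because Lemma~\ref{lem 1}(2) gives $h^0(X, -aK_X) \geq 2$, and set $m_1 = b$, which is the value for which Lemma~\ref{lem non-pencil} (encoded in condition (IF$_{a,b}$)) guarantees that $|-m_1 K_X|$ is not composed with a pencil. With these choices the generic irreducible element $S$ of $|M_{m_0}|$ and the generic irreducible element $C$ of $|M_{m_1}|_S|$ are well defined, as is the invariant $\zeta = (\pi^*(-K_X) \cdot C)$.

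The naive application of Corollary~\ref{cor bir criterion} gives generic finiteness for $m \geq 2m_0 + 2m_1 = 2(a+b) = 2d$ and birationality for $m \geq 3m_0 + 3m_1 = 3d$, since $a + b = d$ for every entry of Table~\ref{tableA}. This already yields part (2) essentially for free: taking $m = 3d \geq 3m_0 + 3m_1$ in Corollary~\ref{cor bir criterion}(3) shows that $|-3dK_X|$ defines a birational map. Likewise, part (1) in the range $m = 2d$ follows directly from Corollary~\ref{cor bir criterion}(1). So the skeleton of the argument is short; the work lies in verifying that the hypotheses of the corollary are genuinely met, in particular the standing assumption $h^0(X, -K_X) > 0$ needed for Proposition~\ref{prop gen finite bir}, which is supplied by Lemma~\ref{lem 1}(1), and that the crepant-birational reduction to the canonical weak Fano setting (as discussed after the definition of condition (IF$_{a,b}$)) is in force.

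The step I expect to require the most care is confirming that $m_0 = a$ and $m_1 = b$ are an admissible pair in Setup~\ref{setup}, specifically that $h^0(X, -aK_X) \geq 2$ really lets us take $m_0 = a$ rather than some smaller effective multiple, and that $b \geq a = m_0$ so that the requirement $m_1 \geq m_0$ holds (here one uses $a \leq b$ from the ordering of the weights in Table~\ref{tableA}). One must also check the arithmetic inequality $m \geq m_0 + m_1 + 1$ required by Proposition~\ref{prop gen finite bir}: for $m = 2d = 2(a+b)$ this reads $2a + 2b \geq a + b + 1$, i.e. $a + b \geq 1$, which is clear; for $m = 3d$ it is even weaker. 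The potential subtlety is that for the smallest examples (such as No.~14 with $a = b = 1$, where $d = 2$) the various bounds collapse to small integers, and I would double-check case by case that no degenerate situation arises—but since all the inequalities are uniform in $a, b$ and hold whenever $a + b \geq 1$, a single uniform argument covering all twelve rows of Table~\ref{tableA} should suffice, with no row needing separate treatment.
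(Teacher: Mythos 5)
Your proposal is correct and follows exactly the paper's argument: the paper's proof is the one-line application of Corollary~\ref{cor bir criterion} with $m_0=a$ and $m_1=b$, using $a+b=d$ so that $2m_0+2m_1=2d$ and $3m_0+3m_1=3d$. The hypothesis checks you carry out (Lemma~\ref{lem 1} for $h^0(X,-K_X)>0$ and $h^0(X,-aK_X)\geq 2$, condition (IF$_{a,b}$) for non-pencilness of $|-bK_X|$) are exactly what is implicitly needed, and no sharpening via $\zeta$ is required.
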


\begin{proof}
This follow from Corollary~\ref{cor bir criterion} (taking $m_0=a$ and $m_1=b$). 
\end{proof}

Combining with Reid's Riemann--Roch formula, we get algebraic information of the anti-canonical rings of $X$ from the anti-canonical geometry of $X$.
\begin{lem}\label{lem fghp}
Let $X$ be a canonical weak Fano $3$-fold satisfying condition (IF$_{a,b}$). Take general elements \begin{align*}
 {}&f\in H^0(X, -K_X)\setminus\{0\},\\
 {}&g\in H^0(X, -aK_X)\setminus\{0\},\\
 {}&h\in H^0(X, -bK_X)\setminus\{0\}, \\
 {}&p\in H^0(X, -2dK_X)\setminus\{0\},\\
 {}&q\in H^0(X, -3dK_X)\setminus\{0\}.
\end{align*}
For a positive integer $k$, denote $$
S_k=\{f^{s_1}g^{s_2}h^{s_3}p^{s_4}\mid s_1, \dots, s_4\in \mathbb{Z}_{\geq 0}, s_1+as_2+bs_3+2ds_4=k\}.
$$
Then 
\begin{enumerate}
 \item The set $\{f, g, h, p\}$ is algebraically independent in the graded algebra $$R(X, -K_X)=\bigoplus_{k\geq 0}H^0(X, -kK_X);$$ 
 \item for $1\leq k\leq 3d-1$, $H^{0}(X, -kK_X)$ is spanned by a basis $S_k$;

\item $H^{0}(X, -3dK_X)$ is spanned by a basis $S_{3d}\cup\{q\}.$
\end{enumerate}
\end{lem}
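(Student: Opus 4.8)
The plan is to prove the three assertions together by induction on $k$, leveraging the dimension counts provided by Lemma~\ref{lem Xd RR} together with the geometric input from Lemma~\ref{lem gen finite}. First I would establish the algebraic independence of $\{f,g,h,p\}$. Since $X$ is a $3$-fold and $R(X,-K_X)$ has Krull dimension $4$, any four elements whose products span the graded pieces in the expected dimensions must be algebraically independent; concretely, if there were a nontrivial polynomial relation among $f,g,h,p$, the subalgebra they generate would have transcendence degree at most $3$, forcing the dimensions of its graded pieces to grow too slowly to match the generating-function count in Lemma~\ref{lem Xd RR}. Alternatively, I would argue geometrically: the map $\Phi_{2d}\colon X\dashrightarrow \mathbb{P}(1,a,b,2d)$ given by $(f,g,h,p)$ is generically finite by Lemma~\ref{lem gen finite}(1), so its image is $3$-dimensional, which is exactly the statement that $f,g,h,p$ satisfy no algebraic relation over $\mathbb{C}$.

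Next I would set up the induction for (2) and (3). The key numerical fact, extracted from Lemma~\ref{lem Xd RR}, is that the Hilbert series of $R(X,-K_X)$ equals
\begin{equation*}
\frac{1-q^{6d}}{(1-q)(1-q^{a})(1-q^{b})(1-q^{2d})(1-q^{3d})}.
\end{equation*}
Expanding $\tfrac{1}{(1-q)(1-q^{a})(1-q^{b})(1-q^{2d})}$ as a power series, its coefficient in degree $k$ counts precisely the monomials $f^{s_1}g^{s_2}h^{s_3}p^{s_4}$ with $s_1+as_2+bs_3+2ds_4=k$, i.e.\ it equals $\#S_k$, provided these monomials are linearly independent. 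The factor $(1-q^{3d})$ in the denominator contributes the generator $q$ starting in degree $3d$, and the numerator $(1-q^{6d})$ encodes the single relation $q^2 = (\text{polynomial in }f,g,h,p)$ appearing in degree $6d$. For $1\le k\le 3d-1$, the extra generator $q$ has not yet entered, so I would show that $\#S_k = h^0(X,-kK_X)$ and that $S_k$ is linearly independent, whence $S_k$ is a basis.

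The linear independence of $S_k$ is where the algebraic independence from part (1) does the work: distinct monomials $f^{s_1}g^{s_2}h^{s_3}p^{s_4}$ are linearly independent in the fraction field because $f,g,h,p$ are algebraically independent. Thus $\#S_k \le h^0(X,-kK_X)$, and it remains only to verify the reverse inequality $\#S_k \ge h^0(X,-kK_X)$, i.e.\ that the monomials actually span. Here I would compare the count $\#S_k$ with the Hilbert function: since $\#S_k$ equals the degree-$k$ coefficient of $\tfrac{1}{(1-q)(1-q^a)(1-q^b)(1-q^{2d})}$, and this dominates (coefficient-wise, in the relevant range) the coefficient of the full Hilbert series, a dimension count forces equality and hence spanning in degrees $k\le 3d-1$. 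For part (3), in degree exactly $3d$ the new section $q$ appears, contributing one extra dimension beyond $\#S_{3d}$; since $q\notin \operatorname{span}(S_{3d})$ (it is not a monomial in $f,g,h,p$, as those generate a subring missing the degree-$3d$ growth), the set $S_{3d}\cup\{q\}$ has the right cardinality $h^0(X,-3dK_X)$ and is linearly independent, hence a basis.

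The main obstacle I anticipate is making the spanning statement rigorous rather than merely matching dimensions. Matching $\#S_k$ with $h^0(X,-kK_X)$ via the generating function shows the counts agree, but one must still argue that the specific monomials in $S_k$ span $H^0(X,-kK_X)$ and are not merely equinumerous with a basis. The clean way is the one above: establish linear independence first (so $\#S_k\le h^0$), then use the Hilbert-series identity to show $\#S_k = h^0$ for $k<3d$ and $\#S_{3d}+1 = h^0(X,-3dK_X)$, and finally that $q$ is genuinely new in degree $3d$. Verifying that $q$ lies outside $\operatorname{span}(S_{3d})$ amounts to checking that no monomial in $f,g,h,p$ of degree $3d$ can equal a general section of $H^0(X,-3dK_X)$ accounting for the full dimension, which again follows from the strict dimension jump $h^0(X,-3dK_X) = \#S_{3d}+1$ predicted by the $(1-q^{3d})$ factor.
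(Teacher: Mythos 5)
There is a genuine gap in your proof of part (1), and it sits exactly where the hypothesis (IF$_{a,b}$)(2) has to enter. Both of your arguments for the algebraic independence of $\{f,g,h,p\}$ are circular. Lemma~\ref{lem gen finite}(1) says that the map $\varphi_{-2d}$ defined by the \emph{complete} linear system $|-2dK_X|$ is generically finite; it says nothing a priori about the map $\Phi_{2d}$ defined by the four particular sections $f,g,h,p$. To transfer generic finiteness from $\varphi_{-2d}$ to $\Phi_{2d}$ you must know that the monomials in $f,g,h,p$ of degree $2d$ span $H^0(X,-2dK_X)$ --- i.e.\ part (2) for $k=2d$ --- which in your scheme is deduced from part (1). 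The same circularity afflicts your Krull-dimension argument: a relation among $f,g,h,p$ would only force the graded pieces of the \emph{subalgebra they generate} to grow too slowly, and there is no contradiction with Lemma~\ref{lem Xd RR} unless you already know those graded pieces exhaust $H^0(X,-kK_X)$. Symptomatically, your proof never invokes the non-pencilness of $|-bK_X|$, which is the essential hypothesis distinguishing condition (IF$_{a,b}$) from a purely numerical assumption; without it the statement should not be provable.

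The paper closes this gap by a bootstrap that you are missing. First, $h^0(X,-aK_X)=2$ forces $\{f,g\}$ to be algebraically independent; hence the monomials $S_b\setminus\{h\}$ in $f,g$ are linearly independent, and since $h^0(X,-bK_X)=|S_b|$, the generality of $h$ makes $S_b$ a basis of $H^0(X,-bK_X)$. Only now does the non-pencilness of $|-bK_X|$ imply that $\Phi_b=(f,g,h)$ has image of dimension at least $2$, i.e.\ $\{f,g,h\}$ is algebraically independent. Repeating the step once more --- $S'_{2d}$ is linearly independent, $h^0(X,-2dK_X)=|S'_{2d}|+1$, so $S_{2d}=S'_{2d}\cup\{p\}$ is a basis by generality of $p$, and only then does Lemma~\ref{lem gen finite}(1) apply to $\Phi_{2d}$ --- yields the algebraic independence of $\{f,g,h,p\}$. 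Your treatment of parts (2) and (3) given part (1) (linear independence from algebraic independence, the coefficientwise identity of the two generating functions in degrees $k<3d$, and the dimension jump at $3d$ absorbing $q$) matches the paper and is fine; it is only the entry point, part (1), that needs the staged argument.
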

\begin{proof}
First we show the following claim. 
\begin{claim}\label{claim fgh}
 The set $\{f, g, h\}$ is algebraically independent.
\end{claim}
\begin{proof}
If $a=b=1$, then as $h^0(X, -bK_X)=3$, $\{f, g, h\}$ is a basis of $H^0(X, -K_X)$. By assumption, $|-bK_X|=|-K_X|$ is not composed with a pencil, which is equivalent to say that the rational map $X\dashrightarrow \mathbb{P}^2$ induced by $f, g, h$ is dominant. 
This implies that $\{f, g, h\}$ is algebraically independent.

If $b>1$, we have $h^0(X, -aK_X)=2$. So $H^0(X, -aK_X)$ is spanned by $\{f^a, g\}$ and $\{f, g\}$ is algebraically independent. So $$S_b\setminus \{h\}=\{f^{b-as_2}g^{s_2}\mid 0\leq s_2\leq \lfloor b/a\rfloor\}$$ is linearly independent in $H^0(X, -bK_X)$. On the other hand, $h^0(X, -bK_X)=\lfloor b/a\rfloor+2=|S_b|$ by Lemma~\ref{lem Xd RR}. So $S_b$ is a basis of $H^0(X, -bK_X)$ by the generality of $h$.
By the assumption that $|-bK_X|$ is not composed with a pencil, the transcendental degree of $\mathbb{C}(f, g, h)$ is large than $2$, which implies that $\{f, g, h\}$ is algebraically independent.
\end{proof}
By Claim~\ref{claim fgh}, 
$$
S_k'=\{f^{s_1}g^{s_2}h^{s_3}\mid s_1, s_2, s_3\in \mathbb{Z}_{\geq 0}, s_1+as_2+bs_3=k\}
$$
is linearly independent in $H^0(X, -kK_X)$ for any positive integer $k$. 
On the other hand, $h^0(X, -2dK_X)=|S'_{2d}|+1$ by Lemma~\ref{lem Xd RR}. Then $S_{2d}=S'_{2d}\cup \{p\}$ is a basis of $H^0(X, -2dK_X)$ by the generality of $p$. By Lemma~\ref{lem gen finite}, $|-2dK_X|$ defines a generically finite map, so the transcendental degree of $\mathbb{C}(f, g, h, p)$ is large than $3$, which implies that $\{f, g, h, p\}$ is algebraically independent. This proves (1).

In particular, $S_k$ is linearly independent in $H^0(X, -kK_X)$ for any positive integer $k$. So (2) follows from the computation that $h^0(X, -kK_X)=|S_{k}|$ for $1\leq k\leq 3d-1$ by Lemma~\ref{lem Xd RR}, and (3) follows from the computation that $h^0(X, -3dK_X)=|S_{3d}|+1$ and the generality of $q$.
\end{proof}

The following is the key lemma of this note, which allows us to drop the non-rationality condition in \cite{330}.

\begin{lem}\label{lem deg=2}
Let $X$ be a canonical weak Fano $3$-fold satisfying condition (IF$_{a,b}$). 
Then
\begin{enumerate}
 \item $|-kK_X|$ does not define a generically finite map for $k<2d$;
 \item $|-2dK_X|$ defines a generically finite map of degree $2$.
 \item Let $\pi: W\to X$ be a resolution such that for $k=a, b, 2d$, $$\pi^*(-kK_X)=M_{k}+F_{k}$$ where $M_{k}$ is free and $F_{k}$ is the fixed part. Then $$(M_{a}\cdot M_{b}\cdot M_{2d})=2.$$
\end{enumerate}
\end{lem}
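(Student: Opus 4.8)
The plan is to establish the three statements in sequence, using the explicit description of the anti-canonical ring from Lemma~\ref{lem fghp} as the main tool, and leveraging the degree-counting identity $(-K_X)^3 = \frac{1}{330} = \frac{1}{1\cdot 5 \cdot 6\cdot 22\cdot 33}$ (or its analogue $\frac{1}{a b\cdot 2d\cdot 3d}$ in the general case) that one expects for $X_{6d}\subset\mathbb{P}(1,a,b,2d,3d)$.

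First I would address (1) and the upper bound in (3) simultaneously by studying the rational map
$$\Phi: X\dashrightarrow \mathbb{P}(1,a,b,2d)$$
induced by the four algebraically independent sections $f,g,h,p$ from Lemma~\ref{lem fghp}(1). Since $\{f,g,h,p\}$ is algebraically independent, $\Phi$ is dominant, hence generically finite onto a $3$-dimensional image; by Lemma~\ref{lem gen finite}(1) it factors the morphism defined by $|-2dK_X|$ after resolution. The key computation is to identify the degree of $\Phi$ with the intersection number $(M_a\cdot M_b\cdot M_{2d})$ up to the appropriate weighting. Concretely, on the resolution $\pi:W\to X$, the movable parts $M_a, M_b, M_{2d}$ pull back the hyperplane-type classes $\OO_{\mathbb{P}}(a), \OO_{\mathbb{P}}(b), \OO_{\mathbb{P}}(2d)$ on the weighted projective target, so $(M_a\cdot M_b\cdot M_{2d})$ equals $\deg\Phi$ times the degree of the tautological triple product on $\mathbb{P}(1,a,b,2d)$, which is $\frac{a\cdot b\cdot 2d}{1\cdot a\cdot b\cdot 2d}=1$. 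Thus $(M_a\cdot M_b\cdot M_{2d})=\deg\Phi$. For (1), I would argue that no linear system $|-kK_X|$ with $k<2d$ can separate enough functions to be generically finite: the sections in degrees below $2d$ are all monomials in $f,g,h$ by Lemma~\ref{lem fghp}(2), and these only generate the function field $\mathbb{C}(f,g,h)$, of transcendence degree $2$ (since $|-bK_X|$ being non-pencil gives exactly transcendence degree $2$, not $3$), so the image is at most a surface.

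The heart of the argument, and the step I expect to be the main obstacle, is showing that $\deg\Phi = 2$ in (2), equivalently $(M_a\cdot M_b\cdot M_{2d})=2$ in (3). The lower bound $\deg\Phi\geq 2$ should follow because $\Phi$ cannot be birational: if it were, then $\mathbb{C}(X)=\mathbb{C}(f,g,h,p)$ would be rational, but more to the point the anti-canonical ring would force the relation structure to collapse, contradicting the computation $h^0(X,-3dK_X)=|S_{3d}|+1$ in Lemma~\ref{lem fghp}(3), which produces the extra generator $q$ in degree $3d$ that cannot be a monomial in $f,g,h,p$. For the upper bound $\deg\Phi\leq 2$, I would compute $(M_a\cdot M_b\cdot M_{2d})$ directly via the projection formula and the identity $(\pi^*(-K_X))^3=(-K_X)^3=\frac{1}{abcd'}$ where $(a,b,2d,3d)$ determine the weights, combined with estimates $F_k\geq 0$ and the inequalities relating $M_k$ to $\pi^*(-kK_X)$. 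The delicate point is controlling the fixed parts $F_a, F_b, F_{2d}$ well enough to pin the intersection number to exactly $2$ rather than merely bounding it; this is where the precise values of $h^0(X,-kK_X)$ from Lemma~\ref{lem Xd RR} and the freeness of the $M_k$ on $S$ (via $|M_{m_1}|_S|$ in Setup~\ref{setup}) must be used to rule out any strictly smaller or larger value.

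To organize the proof cleanly, I would first prove (3) as the central numerical claim, then deduce (2) as its geometric interpretation ($\deg\Phi = (M_a\cdot M_b\cdot M_{2d})=2$), and finally obtain (1) from the transcendence-degree bookkeeping together with Lemma~\ref{lem deg=2}(2) itself (once we know the full map to $\mathbb{P}(1,a,b,2d,3d)$ via $q$ is birational, any map in degree $<2d$ that were generically finite would contradict the factorization through the degree-$2$ cover). The expected difficulty is entirely in the exact determination of the triple intersection number; bounding it from both sides requires genuinely using the specific numerical data of Table~\ref{tableA} and cannot be done by soft arguments alone.
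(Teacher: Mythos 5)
Your part (1) and the upper bound $(M_a\cdot M_b\cdot M_{2d})\leq 2$ are broadly in line with the paper (which obtains the bound from $M_k\leq \pi^*(-kK_X)$ and $2abd\,(-K_X)^3=2$ rather than from intersection numbers on $\mathbb{P}(1,a,b,2d)$, whose relevant linear systems are not free --- but either route is workable). The genuine gap is the lower bound, i.e.\ ruling out $\deg\Phi_{2d}=1$. Your argument --- that birationality of $\Phi_{2d}$ would ``collapse the relation structure'' and contradict the extra generator $q$ in degree $3d$ --- does not work: a birational map $X\dashrightarrow\mathbb{P}(1,a,b,2d)$ induces an isomorphism of function fields, not of graded section rings, so $\mathbb{C}[f,g,h,p]$ can perfectly well be a proper subring of $R(X,-K_X)$ while $\Phi_{2d}$ is birational; the existence of $q$ yields no contradiction. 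Excluding this case is precisely the crux of the whole paper: in the prequel \cite{330} it was handled by assuming $X$ non-rational, and the present note exists to remove that hypothesis, so it cannot be dispatched by the bookkeeping you describe.

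The paper's actual mechanism, absent from your proposal, is a genus computation. Taking $m_0=a$, $m_1=b$ in Setup~\ref{setup}, one shows $g(C)=1$ for the generic irreducible element $C$ of $|M_b|_S|$: if $g(C)\neq 1$, Corollary~\ref{cor bir criterion}(2) would make $|-(a+b+1)K_X|=|-(d+1)K_X|$ define a generically finite map, contradicting part (1) since $d+1<2d$. Then $M_{2d}|_C$ is a movable class on an elliptic curve with $(M_{2d}\cdot C)\leq(M_{2d}\cdot M_a\cdot M_b)\leq 2abd\,(-K_X)^3=2$, and a degree-$1$ divisor on an elliptic curve is never movable, so $(M_{2d}\cdot C)=2$; this forces $(M_a\cdot M_b\cdot M_{2d})=2$ and $\deg\varphi_{-2d}=2$ simultaneously, giving both the lower and upper bounds at once. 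Without an input of this kind (or the Noether--Fano/general-elephant approach sketched in the paper's last section), the case $\deg\Phi_{2d}=1$ cannot be excluded, and your proof is incomplete at exactly the step you yourself flag as the main obstacle.
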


\begin{proof}
By Lemma~\ref{lem fghp}, $S_k$ has transcendental degree at most $3$ for $k<2d$. So $|-kK_X|$ does not define a generically finite map for $k<2d$.

Keep the same notation as in Setup~\ref{setup}. We may take $m_0=a$ and $m_1=b$. Then by construction, $S\in |M_a|$ and $C$ is a generic irreducible element of $|M_b|_S|$. If $g(C)\neq 1$, then by Corollary~\ref{cor bir criterion}, $|-(a+b+1)K_X|$ defines a generically finite map, which contradicts (1) as $a+b+1=d+1<2d$. So $g(C)=1$. 
If $|-2dK_X|$ defines a generically finite map of degree $d_0$, since $M_{2d}$ is the free part of $|\pi^*(-2dK_X)|$ and $C$ is general, $|M_{2d}|_C|$ defines a generically finite map of degree $d_0$ on $C$. 
On the other hand, $$(M_{2d}\cdot C)\leq (M_{2d}\cdot M_a\cdot M_b)\leq 2abd\pi^*(-K_X)^3 =2.$$
Note that a divisor of degree $1$ on an elliptic curve is never movable, so $M_{2d}\cdot C=2$.
Therefore, $(M_{2d}\cdot M_a\cdot M_b)=2$ and $d_0=2$.
\end{proof}

\begin{thm}\label{mainthm3}
Let $X$ be a canonical weak Fano $3$-fold satisfying condition (IF$_{a,b}$). 
Then $X$ is a weighted hypersurface in $\mathbb{P}(1,a,b,2d,3d)$ defined by a weighted homogeneous polynomial $F$ of degree $6d$, where 
$$F(x, y, z, w, t)=t^2+F_0(x, y, z, w)$$ in suitable homogeneous coordinates $[x: y: z: w: t]$ of $\mathbb{P}(1, a, b, 2d, 3d)$.
\end{thm}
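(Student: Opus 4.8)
The plan is to use the structure of the anti-canonical ring established in the preceding lemmas to construct the weighted hypersurface directly. By Lemma~\ref{lem fghp}, the general sections $f, g, h, p$ of degrees $1, a, b, 2d$ are algebraically independent in $R(X,-K_X)=\bigoplus_{k\geq 0}H^0(X,-kK_X)$, so they generate a polynomial subring $\mathbb{C}[x,y,z,w]$ (with $\deg x=1$, $\deg y=a$, $\deg z=b$, $\deg w=2d$) sitting inside the anti-canonical ring, and this inclusion induces the generically finite degree-$2$ map $\varphi_{-2d}\colon X\dashrightarrow \mathbb{P}(1,a,b,2d)$ of Lemma~\ref{lem deg=2}(2). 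Adjoining the degree-$3d$ section $q$ from Lemma~\ref{lem fghp}(3), I would produce a ring homomorphism from the weighted polynomial ring $\mathbb{C}[x,y,z,w,t]$ (with $\deg t=3d$) onto $R(X,-K_X)$, sending $t\mapsto q$. The key numerical inputs are the Hilbert series from Lemma~\ref{lem Xd RR} together with the dimension counts in Lemma~\ref{lem fghp}, which force the cokernel and kernel of this map to have exactly the expected sizes.

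First I would show that $R(X,-K_X)$ is generated as a $\mathbb{C}$-algebra by $\{f,g,h,p,q\}$. For degrees $1\leq k\leq 3d-1$ this is immediate from Lemma~\ref{lem fghp}(2), which says $H^0(X,-kK_X)$ has the monomial basis $S_k$; degree $3d$ is handled by Lemma~\ref{lem fghp}(3). For higher degrees I would argue inductively that the multiplication maps fill out each graded piece, comparing $\dim H^0(X,-kK_X)$ (computed via the Hilbert series in Lemma~\ref{lem Xd RR}, which equals the coefficient of $q^k$ in $\frac{1-q^{6d}}{(1-q)(1-q^a)(1-q^b)(1-q^{2d})(1-q^{3d})}$) against the number of admissible monomials in $f,g,h,p,q$. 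The surjection $\mathbb{C}[x,y,z,w,t]\twoheadrightarrow R(X,-K_X)$ realizes $X=\Proj R(X,-K_X)$ as a closed subvariety of $\mathbb{P}(1,a,b,2d,3d)$.

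Next I would identify the single defining relation. Since $\{f,g,h,p\}$ is algebraically independent but $q$ satisfies $q^2\in \mathbb{C}[f,g,h,p]$ (the map $\varphi_{-2d}$ has degree $2$, so $q$ is integral of degree $2$ over the subfield $\mathbb{C}(f,g,h,p)$, and $q^2$ lives in degree $6d$, where Lemma~\ref{lem Xd RR} shows the monomials in $f,g,h,p$ together with $q^2$ are linearly dependent), there is a relation of the form $q^2=F_0(f,g,h,p)$ for some weighted homogeneous $F_0$ of degree $6d$. After rescaling $t$ this gives the defining polynomial $F(x,y,z,w,t)=t^2+F_0(x,y,z,w)$. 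Comparing Hilbert series—the coefficient-wise equality of $\sum_k h^0(X,-kK_X)q^k$ with the Hilbert series of $\mathbb{C}[x,y,z,w,t]/(F)$, which is exactly the right-hand side of Lemma~\ref{lem Xd RR} by the Koszul resolution of a hypersurface—shows that $(F)$ is the \emph{entire} kernel, so $R(X,-K_X)\cong \mathbb{C}[x,y,z,w,t]/(F)$ and $X\cong X_{6d}$.

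The main obstacle will be controlling the higher-degree graded pieces rigorously, i.e.\ proving that the five sections really do generate the whole anti-canonical ring and that the kernel of the polynomial map is generated by the single element $F$ rather than merely containing it. The clean way around this is the Hilbert-series bookkeeping: one checks that the surjection $\mathbb{C}[x,y,z,w,t]/(F)\twoheadrightarrow R(X,-K_X)$ is a degree-preserving map of graded rings whose source and target have identical Hilbert series (both given by the formula of Lemma~\ref{lem Xd RR}), whence it is an isomorphism. Verifying this requires knowing $X$ has canonical singularities so that $R(X,-K_X)$ is normal and $X=\Proj R(X,-K_X)$ recovers $X$ itself, together with the observation that the quasi-smoothness or at least the well-formedness of the resulting hypersurface follows from $X$ being the given variety; the Reid--Riemann--Roch matching guarantees there are no extra generators or relations hiding in any degree.
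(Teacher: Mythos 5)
Your overall route is the same as the paper's: take the five general sections $f,g,h,p,q$, use the dimension count in degree $6d$ to produce a single weighted-homogeneous relation $F$ with $t^2$ appearing, and then match Hilbert series (via the Koszul resolution of a hypersurface against Lemma~\ref{lem Xd RR}) to identify $R(X,-K_X)$ with $\mathbb{C}[x,y,z,w,t]/(F)$. However, as written your endgame is circular. You invoke a \emph{surjection} $\mathbb{C}[x,y,z,w,t]/(F)\twoheadrightarrow R(X,-K_X)$ and use equality of Hilbert series to conclude it is an isomorphism; but surjectivity is exactly the generation statement you flagged as the main obstacle, and your proposed proof of generation (comparing $h^0(X,-kK_X)$ with the number of admissible monomials) is only valid if those monomials are linearly independent modulo nothing beyond $(F)$ --- i.e., if the kernel of $\mathbb{C}[x,y,z,w,t]\to R(X,-K_X)$ is exactly $(F)$. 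So each of the two facts (kernel $=(F)$, generation) is being deduced from the Hilbert-series identity assuming the other. One of them must be established independently. The paper does this by proving first that the closure $Y$ of the image of $\Phi_{3d}$ equals the irreducible hypersurface $(F=0)$: if $Y$ were cut out by a polynomial of lower degree or one with vanishing $t^2$-coefficient, the projection $\mathbb{P}(1,a,b,2d,3d)\dashrightarrow\mathbb{P}(1,a,b,2d)$ would make $\Phi_{2d}$ birational, contradicting $\deg\Phi_{2d}=2$. This pins down the kernel as $(F)$ (injectivity), after which the Hilbert-series comparison upgrades the inclusion of the subalgebra generated by $f,g,h,p,q$ into $R(X,-K_X)$ to an equality (surjectivity). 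An equivalent algebraic fix: show $F=t^2+F_0$ is irreducible (a factorization $(t-G)(t+G)$ would force $q=\pm G(f,g,h,p)$, contradicting the linear independence of $S_{3d}\cup\{q\}$), so $(F)$ is prime of height one; since the image contains the algebraically independent set $\{f,g,h,p\}$ it has Krull dimension $4$, so the prime kernel containing $(F)$ must equal $(F)$.

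Two smaller points. First, your assertion that the relation takes the form $q^2=F_0(f,g,h,p)$ (no term linear in $q$) does not follow from ``rescaling $t$''; you must complete the square, replacing $q$ by $q+\tfrac12 P_1(f,g,h,p)$, which is harmless since the new $q$ is still general. Second, your claim that $q$ has degree exactly $2$ over $\mathbb{C}(f,g,h,p)$ silently uses that $\Phi_{3d}$ is birational onto its image (Lemma~\ref{lem gen finite}(2) together with the generality of $q$ separating the two points of a general fiber of $\Phi_{2d}$); without that, $q$ could a priori lie in $\mathbb{C}(f,g,h,p)$ and the coefficient of $q^2$ in the degree-$6d$ dependence relation could vanish. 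Both are readily repaired from the cited lemmas, but they are needed links in the chain.
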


 \begin{proof}
Keep the notation in Lemma~\ref{lem fghp}.
We can define $3$ rational maps by $\{f,g,h,p, q\}$:
\begin{align*}
\Phi_{b}: {}&X\dashrightarrow \mathbb{P}(1, a, b); \\
{}& P\mapsto [f(P):g(P):h(P)];\\
\Phi_{2d}: {}&X\dashrightarrow \mathbb{P}(1, a, b, 2d); \\
{}& P\mapsto [f(P):g(P):h(P):p(P)];\\
\Phi_{3d}: {}&X\dashrightarrow \mathbb{P}(1, a, b, 2d, 3d);\\
{}& P\mapsto [f(P):g(P):h(P):p(P): q(P)].
\end{align*}
We claim that they have the following geometric properties.
\begin{claim}Keep the above settings.
\begin{enumerate}

\item $\Phi_{b}$ is dominant; $\Phi_{2d}$ is dominant and generically finite of degree $2$;
\item $\Phi_{3d}$ is birational onto its image;
\item let $Y$ be the closure of $\Phi_{3d}(X)$ in $\mathbb{P}(1, a, b, 2d, 3d)$, then $Y$ is defined by a weighted homogeneous polynomial $F$ of degree $6d$, where 
$$F(x, y, z, w, t)=t^2+F_0(x, y, z, w)$$ in suitable homogeneous coordinates $[x: y: z: w: t]$ of $\mathbb{P}(1, a, b, 2d, 3d)$.
\end{enumerate}\end{claim}
\begin{proof}
(1) By Lemma~\ref{lem fghp}, $\{f, h, h, p\}$ is algebraically independent. Hence $\Phi_{b}$ and $\Phi_{2d}$ are dominant. In particular, $\Phi_{22}$ is generically finite by dimension reason.
The degree of $\Phi_{22}$ is the number of points in the fiber over a general point in $\mathbb{P}(1, a, b, 2d)$. After taking a resolution as in Lemma~\ref{lem deg=2}(3), this number is just $(M_a\cdot M_b\cdot M_{2d})$. So 
$$
\deg \Phi_{2d}=(M_a\cdot M_b\cdot M_{2d})=2.
$$
 
 \medskip
 
(2) By Lemma~\ref{lem gen finite}, $|-3dK_X|$ defines a birational map. As $q$ is general, it can separate two points in a general fiber of $\Phi_{2d}$, so $\Phi_{3d}$ is birational onto its image.

\medskip

(3) Note that $h^0(X, -6dK_X)=|S_{6d}|+|S_{3d}|$ by Lemma~\ref{lem Xd RR}. On the other hand, 
$$S_{6d}\sqcup (S_{3d}\cdot q)\sqcup \{q^2\}\subset H^0(X, -6dK_X).$$ 
So $S_{6d}\sqcup (S_{3d}\cdot q)\sqcup \{q^2\}$ is linearly dependent in $H^0(X, -6dK_X)$. In other words, there exists a weighted homogeneous polynomial $F(x, y, z, w, t)$ of degree $6d$ with $\text{wt}(x, y, z, w, t)=(1,a,b,2d,3d)$ such that 
$$F(f, g, h, p, q)=0.$$
So $Y$ is contained in $(F=0)\subset \mathbb{P}(1, a, b, 2d, 3d)$. Note that $Y$ is a hypersurface in $\mathbb{P}(1, a, b, 2d, 3d)$ by dimension reason.

We claim that $Y=(F=0)$ and $t^2$ has non-zero coefficient in $F$.
Otherwise, either $Y$ is defined by a weighted homogeneous polynomial of degree $\leq 65$, or $t^2$ has zero coefficient in $F$. In either case,
$Y$ is defined by a weighted homogeneous polynomial $\tilde{F}$ of the form 
$$\tilde{F}(x, y, z, w, t)=t\tilde{F}_1(x, y, z, w)+\tilde{F}_2(x, y, z, w).$$
Here $\tilde{F}_1\neq 0$, as $\{f, g, p, q\}$ is algebraically independent.
Then $Y$ is birational to $\mathbb{P}(1, a, b, 2d)$ under the rational projection map
\begin{align*}
{}&\mathbb{P}(1, a, b, 2d, 3d)\dashrightarrow \mathbb{P}(1, a, b, 2d);\\
{}&[x:y:z:w:t]\mapsto [x:y:z:w].
\end{align*}
But the induced map $X\dashrightarrow Y\dashrightarrow \mathbb{P}(1, a, b, 2d)$ coincides with $\Phi_{2d}$, which contradicts the fact that $\Phi_{2d}$ is not birational.


So $Y=(F=0)$ and $t^2$ has non-zero coefficient in $F$. After a suitable coordinate change we may assume that $F=t^2+F_0(x, y, z, w)$.
\end{proof}

Now go back to the proof of Theorem~\ref{mainthm3}.
By the above claim, $F$ is the only algebraic relation on $f, g, h, p, q$. Denote $\mathcal{R}$ to be the graded sub-$\mathbb{C}$-algebra of $$R(X, -K_X)=\bigoplus_{m\geq 0}H^0(X, -mK_X)$$
generated by $\{f, g, h, p, q\}$. Then we have a natural isomorphism between graded $\mathbb{C}$-algebras
$$
\mathcal{R}\simeq \mathbb{C}[x, y, z, w, t]/(t^2+F_0)
$$
by sending $f\mapsto x$, $g\mapsto y$, $h\mapsto z$, $p\mapsto w$, $q\mapsto t$ and the right hand side is exactly the weighted homogeneous coordinate ring of $Y$.
Write $\mathcal{R}=\bigoplus_{m\geq 0}\mathcal{R}_m$ where $\mathcal{R}_m$ is the homogeneous part of degree $m$. Then
by \cite[3.4.2]{Dol82}, 
$$
\sum_{m\geq 0}\dim_\mathbb{C} \mathcal{R}_m \cdot q^m= \frac{1-q^{6d}}{(1-q)(1-q^{a})(1-q^{b})(1-q^{2d})(1-q^{3d})}.
$$
So by Lemma~\ref{lem Xd RR}, $\mathcal{R}_m=H^0(X, -mK_X)$ for any $m\in \mathbb{Z}_{\geq 0}$, and hence the inclusion $\mathcal{R}\subset R(X, -K_X)$ is an isomorphism.
Since $-K_X$ is ample, this implies that 
$$X\simeq \Proj R(X, -K_X) = \Proj\mathcal{R}\simeq Y. $$
This finishes the proof.
 \end{proof}

 \begin{proof}[Proof of Theorem~\ref{mainthm2}]
It follows from Lemma~\ref{lem non-pencil} and Theorem~\ref{mainthm3}.
 \end{proof}

 \begin{proof}[Proof of Theorem~\ref{mainthm}]
 By \cite[Theorem~1.1]{CC08}, if $(-K_X)^3=\frac{1}{330}$, then $B_{X}=B_{X_{66}}$ for $$X_{66}\subset \mathbb{P}(1,5,6,22, 33)$$ as in Table~\ref{tableA}. Hence the theorem is a special case of Theorem~\ref{mainthm2}.
 \end{proof}

\section{Another approach via the Noether--Fano--Iskovskikh inequality and general elephants}

In this section, we discuss another possible approach to Theorem~\ref{mainthm2}. 

Keep the notation in Lemma~\ref{lem fghp}, following the proof in \cite{330} and Theorem~\ref{mainthm2}, the essential point is to show that the map \begin{align*}
\Phi_{2d}: {}&X\dashrightarrow \mathbb{P}(1, a, b, 2d); \\
{}& P\mapsto [f(P):g(P):h(P):p(P)]
\end{align*}
is not birational. 

Suppose that $\Phi_{2d}$ is birational. Note that $\mathcal{O}_{\mathbb{P}}(2abd)$ is very ample on $\mathbb{P}(1, a, b, 2d)$ and the strict transform of $|\mathcal{O}_{\mathbb{P}}(2abd)|$ on $X$ is the movable linear system $\mathcal{M}\subset |-2abdK_X|$ generated by $S_{2abd}\subset H^0(X, -2abdK_X)$. As $X$ is a $\mathbb{Q}$-factorial terminal Fano $3$-fold with $\rho(X)=1$, the Noether--Fano--Iskovskikh inequality (\cite{Cor95}, \cite{CPR00}) implies that $(X, \frac{1}{2abd}\mathcal{M})$ is not canonical. 

So in order to show that $\Phi_{2d}$ is not birational, it suffices to show that $(X, \frac{1}{2abd}\mathcal{M})$ is canonical. Furthermore, by Bertini type theorem, it suffices to show that $(X, E)$ is canonical, where $E\in |-K_X|$ is defined by $f\in H^0(X, -K_X)$, which is equivalent to say that a general element in $|-K_X|$ has at worst Du Val singularities. The latter one is called the {\it general elephant conjecture} proposed by Reid. If $X$ is a smooth Fano $3$-fold, then an earlier work of Shokurov \cite{Sho} showed that a general element of $|-K_X|$ is smooth. In general there are counterexamples to the {general elephant conjecture} for terminal Fano $3$-folds (see for example \cite[Examples~4.2--4.5]{Sano}), but we might still hope that it holds for Fano $3$-folds in Theorem~\ref{mainthm2}.
In fact, there is an interesting 1-1 correspondence between general elephants of weighted hypersurface Fano $3$-folds in Iano--Fletcher's list \cite[16.6]{IF00} and weighted hypersurface Du Val K3 surfaces in Reid's list \cite[13.3]{IF00}.
See also \cite{Ale94} for discussions on the general elephant conjecture.

At the last, recall that our goal is to show that $(X, \frac{1}{2abd}\mathcal{M})$ is canonical, which is weaker than the {general elephant conjecture}. One can try to prove this by methods in \cite{CP17}. The bad news is that we have no hypersurface structure on $X$ and it is not easy to find good divisors and curves as in \cite{CP17}; on the other hand, we know a lot on singularities of $X$ and the behavior of anti-pluri-canonical systems, which might help us to shape the geometry of $X$. For example, it might be possible to show that the movable linear system $\mathcal{M}$ is free (this is true by the conclusion of Theorem~\ref{mainthm2}, but we are looking for a different approach here), and hence $(X, \frac{1}{2abd}\mathcal{M})$ is automatically canonical by the Bertini theorem.

\section*{Acknowledgments}
The author was supported by National Key Research and Development Program of China (Grant No.~2020YFA0713200) and NSFC for Innovative Research Groups (Grant No. 12121001).

\end{document}